\documentclass[11pt]{article}

\makeatletter
\newcommand{\doublewidetilde}[1]{{%
  \mathpalette\double@widetilde{#1}%
}}
\newcommand{\double@widetilde}[2]{%
  \sbox\z@{$\m@th#1\widetilde{#2}$}%
  \ht\z@=.9\ht\z@
  \widetilde{\box\z@}%
}
\usepackage{xpatch}
\usepackage{alltt}
\makeatletter
\g@addto@macro\bfseries{\boldmath}
\makeatother
\usepackage{bm}
\usepackage{extpfeil}
\usepackage{setspace}
\usepackage{tikz}
\usepackage{siunitx}

\usepackage{blindtext, rotating}
\usepackage{booktabs}
\usepackage{array}
\newcolumntype{C}[1]{>{\centering\arraybackslash}p{#1}}
\usepackage{color}
\usepackage{float}
\usetikzlibrary{matrix,positioning}
\setlength\parindent{10pt}
\usepackage{mathtools}

\makeatother

\usepackage{amsmath,tabu}
\usepackage{tikz-cd}
\usepackage{longtable}
\usepackage[utf8]{inputenc}
\usepackage{ctable}
\usepackage{eufrak}
\usepackage{pdflscape}
\usepackage{array}
\newcolumntype{P}[1]{>{\centering\arraybackslash}p{#1}}
\usepackage{enumerate}
\usepackage{amsthm}
\usepackage{amsfonts}
\usepackage{multicol}
\usepackage{array}
\newcolumntype{L}{>{$}l<{$}}
\usepackage{hyperref}
\usepackage[margin=1in]{geometry}
\usepackage{enumitem}  
\begingroup
\catcode`\&=13
\gdef\pampmatrix{%
  \begingroup
  \let&=\amsamp
  \begin{pmatrix}%
}
\gdef\endpampmatrix{\end{pmatrix}\endgroup}
\endgroup
\usepackage{stackengine}
\newtheorem{theorem}{Theorem}[section]

\newtheorem{corollary}[theorem]{Corollary}

\newtheorem{lemma}[theorem]{Lemma}
\newtheorem{proposition}[theorem]{Proposition}

\usepackage{amssymb}

\theoremstyle{definition}

\newtheorem{remark}[theorem]{Remark}
\newtheorem{definition}[theorem]{Definition}
\stackMath
\usepackage{amssymb}
\usepackage{graphicx}
\usepackage{hyperref}
\makeatletter
\renewcommand*\env@matrix[1][*\c@MaxMatrixCols c]{%
  \hskip -\arraycolsep
  \let\@ifnextchar\new@ifnextchar
  \array{#1}}
\makeatother
\newcommand{\A}[1]{\mathrm{A}_{#1}} 
\renewcommand{\S}[1]{\mathrm{S}_{#1}} 
 
\renewcommand{\sup}{\mathrm{Supp}}

\newcommand{\Sym}[1]{\mathrm{Sym}({#1})}

\newcommand{\PSL}{\mathrm{PSL}} 
 
\newcommand{\Out}{\mathrm{Out}} 
\newcommand{\Sp}{\mathrm{Sp}} 
\newcommand{\PSp}{\mathrm{PSp}} 

\newcommand{\PGamL}{\mathrm{P} \Gamma \mathrm{L}} 
\newcommand{\POm}{\mathrm{P\Omega}} 

\newcommand{\GL}{\mathrm{GL}} 
 
\newcommand{\PGL}{\mathrm{PGL}} 
\newcommand{\Syl}[1]{\mathrm{Syl}} 
\renewcommand{\max}{\mathrm{max}} 
\renewcommand{\k}{k} 
\renewcommand{\l}{l}
\newcommand{\s}[1]{^{(#1)}}
\newcommand{\I}{\mathrm{I}}
\newcommand{\RC}{\mathrm{RC}}
\newcommand{\F}{\mathbb{F}}

\renewcommand{\H}{\mathrm{H}}
\newcommand{\M}{\mathrm{M}}
\renewcommand{\a}{a}
\renewcommand{\b}{b}
\newcommand{\B}{\mathrm{B}}
\newcommand{\bb}{\mathrm{b}}
\newcommand{\PG}{\mathcal{PG}}
\newcommand{\Soc}{\mathrm{Soc}}
\renewcommand{\c}{c}
\renewcommand{\d}{f}
\newcommand{\n}{d}
\renewcommand{\t}{n}
\newcommand{\C}{C}
\usepackage{bigfoot}
\usepackage[pagewise]{lineno}
\interfootnotelinepenalty=10000
\begin{document}
\title{{\Large}{\textbf{On relational complexity and base size of finite primitive groups}}}

\author{Veronica Kelsey and Colva M. Roney-Dougal \thanks{ 
\textit{Acknowledgements:} The authors would like to thank Nick Gill for encouraging us to look at this problem and Peter Cameron for his helpful remarks, which improved the paper. \newline \indent
\textit{Key words:} permutation group; base size; relational complexity; computational complexity \newline \indent
\textit{MSC2020:} 20B15; 20B25; 20E32; 20-08}}
\maketitle
\begin{abstract} 
In this paper we show that if $G$ is a primitive subgroup of $\S{n}$ that is not large base, then any irredundant base for $G$ has size at most $5 \log n$. This is the first logarithmic bound on the size of an irredundant base for such groups, and is best possible up to a small constant.
As a corollary, the relational complexity of $G$ is at most $5 \log n+1$, and the maximal size of a minimal base and the height are both at most $5 \log n.$ Furthermore, we deduce that a base for $G$ of size at most $5 \log n$ can be computed in polynomial time.
\end{abstract}
\section{Introduction}\label{intro}

Let $\Omega$ be a finite set. A \textit{base} for a subgroup $G$ of $\Sym{\Omega}$ is a sequence $\Lambda = (\omega_1, \ldots, \omega_{\l})$ of points of $\Omega$ such that $G_{\Lambda} = G_{\omega_1, \ldots, \omega_{\l}}=1$. 
The \textit{minimum base size}, denoted $\bb(G,\Omega)$ or just $\bb(G)$ if the meaning is clear, is the minimum length of a base for $G$. Base size has important applications in computational group theory; see, for example, \cite{Sims} for the importance of a base and strong generating set.

In \cite{Liebeck} Liebeck proved the landmark result that with the exception of one family of groups, if $G$ is a primitive subgroup of $\S{n}=\Sym{\{1,\ldots, n\}}$ then $\bb(G) < 9 \log \t$. The exceptional family are called \textit{large-base} groups: product action or almost simple groups whose socle is one or more copies of the alternating group $\A{r}$ acting on $k$-sets. In \cite{Mariapia} Moscatiello and Roney-Dougal improve this bound, and show that if $G$ is not large base then either $G=\M_{24}$ in its 5-transitive action of degree 24, or $\bb(G) \leq \lceil \log n \rceil +1$. Here and throughout all $\log$s are of base 2.

We say that a base $\Lambda = (\omega_1, \ldots, \omega_k)$ for a permutation group $G$ is \textit{irredundant} if
$$G > G_{\omega_1} > G_{\omega_1,\omega_2} > \cdots > G_{\omega_1, \ldots, \omega_{\l}} =1.$$
If no irredundant base is longer than $\Lambda$, then $\Lambda$ is a \textit{maximal} irredundant base and we denote the length of $\Lambda$ by $\I(G, \Omega)$ or $\I(G)$. \smallskip

From Liebeck's $9\log n$ bound on base size, a straightforward argument (see Lemma \ref{RCbound}) shows that if $G$ is a primitive non-large-base subgroup of $\S{\t}$, then $\I(G) \leq 9\log^2 \t$.
However, in \cite{GLS} Gill, Lod\'{a} and Spiga conjecture that for such groups $G$ there exists a constant $c$ such that $\I(G) \leq c \log \t$. They show that for some families of groups the conjecture holds with $c=7$.
Our main result establishes this conjecture, whilst also improving the constant.
\begin{theorem}\label{main}
Let $G $ be a primitive subgroup of $\S{n}$. If $G$ is not large base, then 
$$\I(G) < 5 \log n.$$
\end{theorem}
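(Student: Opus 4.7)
My plan is to start from the elementary bound $\I(G) \le \log_2 |G|$, which holds because consecutive pointwise stabilizers in an irredundant chain have index at least $2$. This settles the theorem whenever $|G| \le n^5$, so the real task is to identify and handle exactly those primitive non-large-base $G$ with $|G| > n^5$. By the O'Nan--Scott theorem, these lie in two main families: affine groups $V \rtimes H$ with $H \le \GL_d(p)$ irreducible, and almost simple $G$ whose socle is a classical group of Lie type of large rank. Diagonal, twisted wreath, holomorph and non-exceptional product-action types can all be shown to satisfy $|G| \le n^5$ directly and are dispatched by the elementary bound.

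The affine case becomes easy with the right observation. After stabilizing $0 \in V$, bounding $\I(G)$ reduces to bounding the length of an irredundant chain $H = H_0 > H_1 > \cdots > H_l = 1$ of pointwise stabilizers of vectors $v_1,\ldots,v_l \in V = \F_p^d$. If $v_{i+1}$ lay in $\langle v_1,\ldots,v_i\rangle$ then $H_i$, which already fixes each $v_j$ for $j \le i$, would fix $v_{i+1}$ as well, contradicting $H_{i+1} < H_i$; hence the $v_j$ are linearly independent and $l \le d = \log_p n$, giving $\I(G) \le \log n + 1$, comfortably below $5\log n$.

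The main obstacle is the almost simple classical case. For socle $T$ of rank $r$ over $\F_q$ acting on a subspace orbit, the trivial bound $\log_2 |T| \sim r^2 \log q$ overshoots $5 \log n \sim 5r\log q$ by a factor of roughly $r/5$, so a genuine structural argument is required. My plan is to adapt the affine ``flag'' idea: the pointwise stabilizer of an irredundant sequence of base subspaces $U_1, \ldots, U_i$ fixes each $U_j$ setwise, hence fixes the total subspace $W_i = U_1 + \cdots + U_i$ setwise; irredundancy should force $W_i$ to grow quantitatively, and each growth step drops the chain into a parabolic whose Levi factor is classical of strictly smaller rank, enabling an induction that produces the bound $\I(G) < 5\log n$. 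Non-subspace Aschbacher actions can be verified to satisfy $|G| \le n^5$ by explicit index estimates, while the small-rank and small-field exceptions, the alternating and sporadic almost simple cases, and the outlier $\M_{24}$ from \cite{Mariapia} are handled by direct calculation.
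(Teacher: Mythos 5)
Your overall architecture (O'Nan--Scott, plus the elementary bound $\I(G)\le\log|G|$ when $|G|$ is small relative to $n^5$) is reasonable and broadly parallels the paper's division into cases. Two things work well: the affine argument via linear independence of the fixed vectors is a clean self-contained observation (the paper delegates the affine case to [GLS]), and the elementary $\log|G|$ bound is indeed Lemma~\ref{fands}\ref{sneaky} and does the work in the low-order cases. Note one small slip there: you need $\I(G)\le\log|G|-1$ (which Lemma~\ref{fands}\ref{sneaky} gives for transitive $G$ with $n\ge 5$), not $\I(G)\le\log|G|$, to obtain the \emph{strict} inequality $\I(G)<5\log n$ when $|G|=n^5$.

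The genuine gap is the almost simple classical case, which is the heart of the theorem and exactly where the paper does its hardest work. Your proposed invariant $W_i=U_1+\cdots+U_i$ does not suffice: irredundancy forces a proper drop in each successive stabiliser, but it does \emph{not} force $W_i$ to grow, and the chain can continue dropping long after $W_i=V$. Concretely, for $\PGL_\n(q)$ on $1$-spaces one reaches $W_\n=V$ after $\n$ steps, yet the paper's Theorem~\ref{PGLthm} shows the irredundant chain has length $2\n-1$ (and in general $(m+1)\n-2m+1$ on $m$-spaces); the last $\n-1$ steps are invisible to your flag. So ``each growth step drops into a parabolic of smaller rank'' does not control the part of the chain where $W_i$ is stuck, which is precisely the part that prevents a naive $\I\le\n/m$-type bound. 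The paper's replacement idea is to track, not the flag $W_i$, but the dimension of the matrix algebra $M_k=\{g\in M(V): \omega_i g\le\omega_i\text{ for }i\le k\}$; the quantity $\dim M_k$ decreases by at least $1$ at \emph{every} irredundant step (Lemma~\ref{dimt}), and by roughly $b_k(\n-m)$ at spanning steps, and a weighted count of the drop types (equations~\eqref{c0c1}--\eqref{newc0}) produces the sharp bound. You would need something of this strength, not just flag growth, to make the induction close. Separately, your claim that all non-PA, non-almost-simple types satisfy $|G|\le n^5$ (and that ``non-exceptional'' PA does too) is not automatic and is false for PA with a high-order classical base group; the paper handles PA by reduction to the almost simple case via $\I(G)\le r(\I(H,\Delta)-1)+1+\ell(\S{r})$ (Lemma~\ref{PAlem}), not by an order bound, and for the remaining non-standard almost simple actions with $\bb(G)=6$ the verification $|H|<[G:H]^4$ in Proposition~\ref{nonstand} requires explicit estimates for $E_6(q)$ and $E_7(q)$.
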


There are infinitely many primitive groups for which the maximal irredundant base size is greater than $\lceil \log \t \rceil +1$. For example, if $d \geq 5$, $G=\PGL_{\n}(3)$ and $\Omega$ is the set of 1-spaces of $\F_3^{\n}$, then by Theorem \ref{PGLthm} $\I(G, \Omega) = 2\n-1>\lceil \log \t \rceil +1$. Hence, up to a small constant the bounds in Theorem \ref{main} are the best possible.\medskip

Relational complexity has been extensively studied in model theory, see for example \cite{lach}. A rephrasing of the definition, to make it easier to work with for permutation groups, was introduced more recently in \cite{cherlin}. For an excellent discussion and more context, see \cite{BinaryPaper}.
Let $\k,\l \in \mathbb{N}$ with $\k \leq \l$, and let $\Lambda = (\lambda_1, \ldots, \lambda_{\l}), \Sigma = (\sigma_1, \ldots, \sigma_{\l}) \in \Omega^{\l}$.
We say that $\Lambda$ and $\Sigma$ are \textit{$\k$-subtuple complete} with respect to a subgroup $G$ of $\Sym{\Omega}$, and write $\Lambda \sim_{\k} \Sigma$, if for every subset of $\k$ indices $ i_1, \ldots, i_{\k}$ there exists $g \in G$ such that $(\lambda_{i_1}^g, \ldots, \lambda_{i_{\k}}^g) = (\sigma_{i_1}, \ldots, \sigma_{i_{\k}}).$
The \textit{relational complexity} of $G$, denoted $\RC(G)$, is the smallest $\k$ such that for all $\l \geq \k$ and all $\Lambda, \Sigma \in \Omega^{\l}$, if $\Lambda \sim_{\k} \Sigma$ then $\Lambda \in \Sigma^G$. In \cite{Cherlin2} Cherlin gives examples of groups with relational complexity 2, called \textit{binary groups}, and conjectures that this list is complete. In a dramatic breakthrough, Gill, Liebeck and Spiga have just announced a proof of this conjecture, see \cite{BinaryPaper}.

Let $\Lambda$ be a base for a permutation group $G$. Then $\Lambda$ is \textit{minimal} if no proper subsequence of $\Lambda$ is a base. We denote the maximum size of a minimal base by $\B(G)$.
The \textit{height} $\H(G)$ of $G$, is the size of the largest subset $\Delta$ of $\Omega$ with the property that $G_{(\Gamma)} \neq G_{(\Delta)}$ for each $\Gamma \subsetneq \Delta$.
The following key lemma relates all of these group statistics studied in this paper.
\begin{lemma}\label{RCbound}\emph{\cite[Equation 1.1 and Lemma 2.1]{GLS}} Let $G$ be a subgroup of $\S{\t}$. Then 
$$ \bb(G) \leq \B(G) \leq \H(G) \leq \I(G) \leq \bb(G) \log \t,$$
and
$$ \RC(G) \leq \H(G)+1.$$
\end{lemma}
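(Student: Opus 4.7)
The plan is to prove the five inequalities of Lemma \ref{RCbound} essentially independently, starting with the short routine ones and leaving the relational complexity bound for last as the main obstacle.

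For $\bb(G)\leq\B(G)$, I would observe that any base of minimum size is automatically minimal, since removing a point from a base of size $\bb(G)$ would give a shorter base, contradicting minimality. For $\B(G)\leq\H(G)$, I would take a minimal base $\Lambda=(\omega_1,\dots,\omega_k)$ of maximum length $\B(G)$ and show its underlying set $\Delta=\{\omega_1,\dots,\omega_k\}$ witnesses the height: for any $\Gamma\subsetneq\Delta$, pick $\omega_i\in\Delta\setminus\Gamma$, so that $G_{(\Gamma)}\supseteq G_{(\Delta\setminus\{\omega_i\})}\supsetneq 1=G_{(\Delta)}$, where the strict inclusion uses minimality of $\Lambda$. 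For $\H(G)\leq\I(G)$, I would take $\Delta=\{\delta_1,\dots,\delta_k\}$ witnessing the height and greedily order it: having chosen $\omega_1,\dots,\omega_i$ from $\Delta$ with strictly decreasing stabilisers, if no remaining $\delta\in\Delta$ satisfies $G_{\omega_1,\dots,\omega_i,\delta}\subsetneq G_{\omega_1,\dots,\omega_i}$ then $G_{(\Delta)}=G_{\omega_1,\dots,\omega_i}=G_{(\{\omega_1,\dots,\omega_i\})}$, contradicting the height property. Once $\Delta$ is exhausted, if the stabiliser is not yet trivial, I extend by points outside $\Delta$ one at a time, strictly decreasing the stabiliser each step, producing an irredundant base of length at least $k=\H(G)$.

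For $\I(G)\leq\bb(G)\log n$, I would take an irredundant base $(\omega_1,\dots,\omega_\l)$ of length $\I(G)$. Since each stabiliser is a proper subgroup of the previous, Lagrange gives $|G|\geq 2^{\I(G)}$. On the other hand, the standard counting argument (an element of $G$ is determined by the images of a base) yields $|G|\leq n^{\bb(G)}$. Combining and taking logarithms gives the bound.

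The main obstacle is $\RC(G)\leq\H(G)+1$. Set $k=\H(G)+1$ and take $\l\geq k$ and tuples $\Lambda=(\lambda_1,\dots,\lambda_\l)$, $\Sigma=(\sigma_1,\dots,\sigma_\l)$ with $\Lambda\sim_k\Sigma$; I must produce $g\in G$ mapping $\Lambda$ to $\Sigma$. My approach is to pass to a subtuple that is simultaneously a base-like witness and short enough to exploit $k$-subtuple completeness. Concretely, among the indices $\{1,\dots,\l\}$, I would extract a subset $J=\{i_1,\dots,i_m\}$ of minimal cardinality such that the stabiliser $G_{\lambda_{i_1},\dots,\lambda_{i_m}}$ equals $G_{(\{\lambda_1,\dots,\lambda_\l\})}$, obtained by greedily discarding indices whose removal does not change the stabiliser. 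By construction the set $\{\lambda_{i_1},\dots,\lambda_{i_m}\}$ then has the height-witnessing property, so $m\leq\H(G)=k-1<k$. Applying $k$-subtuple completeness to the indices $J\cup\{j\}$ for each $j\notin J$ produces, for every $j$, an element $g_j\in G$ carrying the subtuple at those indices to the corresponding one of $\Sigma$. Using the minimality of $J$, one shows that a single element $g$ (obtained for the indices $J$) already satisfies $\lambda_j^g=\sigma_j$ for all $j\notin J$ as well, because the coset $G_{\lambda_{i_1},\dots,\lambda_{i_m}}\,g$ is forced by its action on $J$ alone. This yields $\Lambda^g=\Sigma$ as required. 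The delicate point, and where I expect to spend the most care, is verifying that the minimality of $J$ really does pin down the image of every remaining coordinate; this is precisely the content of \cite[Lemma 2.1]{GLS}, whose argument I would follow.
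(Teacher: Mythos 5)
The paper itself does not prove this lemma; it simply cites \cite[Equation~1.1 and Lemma~2.1]{GLS}, so there is no in-paper argument to compare against. Your proof is correct and is essentially the argument of the cited reference, which you acknowledge. A couple of small points worth making explicit. In the step $\B(G)\leq\H(G)$ you should note that a minimal base has no repeated entries (a repeat could be deleted without changing the stabiliser), so the underlying set really has cardinality $\B(G)$; and for $\H(G)\leq\I(G)$ one needs $\H(G)\geq 1$ to start the greedy process, but the $\H(G)=0$ case is trivial. For the final bound $\RC(G)\leq\H(G)+1$, I would tighten the phrasing slightly: the minimality of $J$ is what gives the cardinality bound $|J|\leq\H(G)<k$, which licenses the application of $k$-subtuple completeness to $J\cup\{j\}$ (after padding to size exactly $k$); the forcing of $\lambda_j^g=\sigma_j$ for $j\notin J$ is instead a consequence of the stabiliser equality $G_{\lambda_{i_1},\dots,\lambda_{i_m}}=G_{\lambda_1,\dots,\lambda_\l}$. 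Concretely, if $g$ and $g_j$ both send $\lambda_i$ to $\sigma_i$ for $i\in J$, then $g_jg^{-1}$ lies in this common stabiliser, hence fixes $\lambda_j$, whence $\lambda_j^g=\lambda_j^{g_j}=\sigma_j$. With this made explicit, the proposal is complete.
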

In \cite{GLS}, Gill, Lod\'{a} and Spiga prove that if $G \leq \S{\t}$ is primitive and not large base, then $\H(G) < 9 \log \t$ and so
$\RC(G) < 9 \log \t+1$ and $\B(G) < 9 \log \t$.

It will follow immediately from Theorem \ref{main} and Lemma \ref{RCbound} that we can tighten the all of these bounds.
\begin{corollary}\label{main2}
Let $G$ be a primitive subgroup of $\S{\t}$. If $G$ is not large base then 
$$ \RC(G) < 5\log \t +1, \;\;\;\;   \B(G) < 5 \log \t,  \;\;\;\; \text{and} \;\;\;\; \H(G) < 5 \log \t.$$
\end{corollary}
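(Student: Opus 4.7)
The plan is almost entirely bookkeeping: Corollary \ref{main2} is an immediate consequence of chaining Theorem \ref{main} through the inequalities recorded in Lemma \ref{RCbound}. So my proof proposal is to simply invoke both results and read off the three bounds.

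More concretely, I would first note that by Theorem \ref{main}, any primitive non-large-base $G \leq \S{\t}$ satisfies $\I(G) < 5\log \t$. Lemma \ref{RCbound} gives the chain
$$ \bb(G) \leq \B(G) \leq \H(G) \leq \I(G), $$
so feeding in Theorem \ref{main} at the right-hand end immediately yields $\B(G) < 5\log \t$ and $\H(G) < 5\log \t$. The remaining inequality $\RC(G) \leq \H(G)+1$ from Lemma \ref{RCbound} then gives $\RC(G) < 5\log \t + 1$, completing all three bounds in the statement.

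There is no real obstacle to overcome here, since all the substantive work has been done in Theorem \ref{main}; the only thing to be careful about is that the inequalities in Lemma \ref{RCbound} are weak ($\leq$), so that a strict inequality on $\I(G)$ does translate to strict inequalities on $\B(G)$, $\H(G)$, and $\RC(G)-1$, as claimed. Consequently the proof itself is a single line citing Theorem \ref{main} and Lemma \ref{RCbound}.
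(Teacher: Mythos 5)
Your proposal is correct and is exactly the paper's argument: the paper states Corollary \ref{main2} as an immediate consequence of Theorem \ref{main} combined with the chain of inequalities in Lemma \ref{RCbound}, without further elaboration. Your careful note that the weak inequalities in Lemma \ref{RCbound} still propagate the strict bound on $\I(G)$ is a reasonable point to make explicit, but it does not represent any divergence from the paper.
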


Blaha proved in \cite{Blaha} that the problem of computing a minimal base for a permutation group $G$ is NP-hard. Furthermore, he showed that the obvious greedy algorithm to construct an irredundant base for $G$ produces one of size $O(\bb(G) \log \log n)$. Thus if $G$ is primitive and not large base, it follows from Liebeck’s result that in polynomial time one can construct a base of size $O(\log n \log \log n)$. 
Since an irredundant base size can be computed in polynomial time (see for example \cite{Sims}), we get the following corollary, which improves this bound to the best possible result, up to a constant.
\begin{corollary} Let $G$ be a primitive subgroup of $\S{\t}$ which is not large base. Then a base for $G$ of size at most $5 \log \t$ can be constructed in polynomial time.
\end{corollary}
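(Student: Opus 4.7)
The corollary is essentially a direct combination of Theorem~\ref{main} with standard polynomial-time machinery for permutation groups, so my plan is to make precise the greedy construction of an irredundant base and verify that its length is automatically governed by $\I(G)$.

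The approach I would take is to build the base incrementally, maintaining a strictly decreasing chain of pointwise stabilisers. Start with $\Lambda_0 = ()$ and $H_0 = G$. Given $\Lambda_i = (\omega_1,\ldots,\omega_i)$ with $H_i = G_{\omega_1,\ldots,\omega_i} \neq 1$, pick any point $\omega_{i+1} \in \Omega$ that is moved by some generator of $H_i$, and set $\Lambda_{i+1} = (\omega_1,\ldots,\omega_{i+1})$ and $H_{i+1} = (H_i)_{\omega_{i+1}}$. Stop when $H_\ell = 1$. By construction, $H_{i+1}$ is a proper subgroup of $H_i$ at every step, so the resulting base $\Lambda_\ell$ is irredundant. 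Hence $\ell \le \I(G)$, and by Theorem~\ref{main} we conclude $\ell < 5 \log n$ whenever $G$ is primitive and not large base.

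The second ingredient is that each step runs in polynomial time. Using the Schreier--Sims algorithm one can, in polynomial time in $n$ and the number of generators, compute a base and strong generating set for any permutation group $H \le \S{n}$; from a strong generating set one can read off a generating set for any one-point stabiliser, find a moved point, and test triviality. Thus each iteration of the loop above is polynomial, and since there are at most $5\log n$ iterations, the total running time is polynomial in $n$. A reference such as~\cite{Sims} supplies the standard complexity statements that are needed.

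The only genuine content in the corollary is Theorem~\ref{main}; everything else is standard computational group theory, so there is no real obstacle. The one subtlety worth flagging is that the greedy base produced this way is not a \emph{minimum} base (Blaha showed that computing one is NP-hard), but merely an irredundant one; this is exactly why bounding $\I(G)$, as Theorem~\ref{main} does, rather than $\bb(G)$ is what makes the polynomial-time guarantee possible.
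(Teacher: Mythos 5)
Your proof is correct and matches the paper's approach: the paper likewise observes that any irredundant base can be constructed in polynomial time (via Schreier--Sims, citing Sims) and then invokes Theorem~\ref{main} to bound its length by $5\log n$. You simply spell out the greedy construction of the irredundant base more explicitly, which the paper leaves implicit.
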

(We note that using the bound on $\B(G)$ from \cite{GLS}, a very slightly more complicated argument would yield a similar result, but with $9 \log \t$ in place of $5 \log \t$.)\medskip

The paper is structured as follows. In Section \ref{prelim} we prove some preliminary lemmas about $\I(G)$.
In Section \ref{pglmspaces} we give upper and lower bounds on the size of an irredundant base for $\PGL_{\n}(q)$ acting on subspaces of $\F_q^{\n}$, which differ by only a small amount. In Section \ref{ASgroups} we prove a result which is a slight strengthening of Theorem \ref{main} for almost simple groups. Finally, in Section \ref{PAgroups} we complete the proof of Theorem \ref{main}.
\section{Preliminary bounds on group statistics}\label{prelim}
Here we collect various lemmas about bases, and about the connection between $\I(G)$ and other group statistics.


For a subgroup $G$ of $\Sym{\Omega}$ and a fixed sequence $(\omega_1, \ldots, \omega_l)$ of points from $\Omega$, we let $G^{(i)}=G_{\omega_1, \ldots, \omega_i}$ for $0 \leq i \leq l$, so $G^{(0)} = G$. Furthermore, the maximum length of a chain of subgroups in $G$ is denoted by $\ell(G)$. 
\begin{lemma}\label{fands} Let $G$ be a subgroup of $\S{\t}$. 
\begin{enumerate}[label=\rm{(\roman*)}]
\item If $G$ is insoluble, then $\I(G) < \log |G|-1$. \label{insol}
\item If $G$ is transitive and $n\geq 5$, then $\I(G) \leq \log |G|-1.$ \label{sneaky}
\item If $G$ is transitive and $b = \bb(G)$, then $\I(G) \leq (b-1)\log \t +1 .$ \label{fourth}
\end{enumerate}
\end{lemma}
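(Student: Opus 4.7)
The plan is to handle all three parts through the same basic framework. For any irredundant base $(\omega_1, \ldots, \omega_{\I})$ we have the strictly descending chain
$G = G^{(0)} > G^{(1)} > \cdots > G^{(\I)} = 1$,
in which each index $[G^{(i-1)}:G^{(i)}]$ equals the length of the $G^{(i-1)}$-orbit of $\omega_i$, hence is at least $2$, and whose product is $|G|$.

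For part \ref{fourth}, transitivity of $G$ forces $[G:G^{(1)}] = n$, and the standard bound $|G| \leq n^b$ is an immediate consequence of the existence of a base of size $b$. Combining $|G| \geq n \cdot 2^{\I - 1}$ with $|G| \leq n^b$ gives $2^{\I-1} \leq n^{b-1}$, hence $\I \leq (b-1)\log n + 1$.

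For part \ref{sneaky}, the same first-index observation yields $[G:G^{(1)}] = n \geq 5$, so $|G| \geq 5 \cdot 2^{\I-1}$, and taking logarithms gives $\I \leq \log|G| - \log(5/2)$. Since $\log(5/2) > 1$, this is in fact slightly stronger than the claimed bound.

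Part \ref{insol} is the step I expect to require most thought, because without transitivity we no longer have a structural reason for the first index of the chain to be large, and the required factor of at least $5$ must instead be located somewhere inside the chain. The idea is to identify the transition from insoluble to soluble: let $j$ be the largest index with $G^{(j)}$ insoluble, which exists since $G^{(0)} = G$ is insoluble while $G^{(\I)} = 1$ is soluble. Then $G^{(j+1)}$ is soluble, and $G^{(j)}$ acts on the orbit $\omega_{j+1}^{G^{(j)}}$ of size $m = [G^{(j)}:G^{(j+1)}]$. The kernel of this action fixes $\omega_{j+1}$ and so lies in $G^{(j+1)}$, hence is soluble, while the image is a transitive subgroup of $\S{m}$. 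If $m \leq 4$, the image is a subgroup of the soluble group $\S{4}$, which together with the soluble kernel would force $G^{(j)}$ to be soluble; this contradiction yields $[G^{(j)}:G^{(j+1)}] \geq 5$. Arguing exactly as in part \ref{sneaky} we then conclude $|G| \geq 5 \cdot 2^{\I-1}$, giving the strict bound $\I < \log|G| - 1$.
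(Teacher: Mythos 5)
Your proofs of parts (ii) and (iii) follow essentially the same lines as the paper: both use the chain $G = G^{(0)} > G^{(1)} > \cdots > G^{(\I)} = 1$, the orbit--stabiliser theorem to get $[G:G^{(1)}] = n$, the lower bound $|G| \geq n \cdot 2^{\I-1}$, and for (iii) the upper bound $|G| \leq n^b$, so there is nothing new there (the paper writes $|G| < n^b$, but the non-strict bound is all that is used, and it suffices for the claimed $\leq$).

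Part (i), however, is where you take a genuinely different route, and your argument is correct. The paper argues via subgroup chain length: if $a$ is the number of prime divisors of $|G|$ counted with multiplicity, then $\I(G) \leq \ell(G) \leq a$; and since $G$ is insoluble, $|G|$ has a prime factor at least $5$ (this step is where Burnside's $p^aq^b$ theorem is being invoked, albeit implicitly), so $|G| > 2^{a+1}$, giving $\I(G) \leq a < \log|G| - 1$. You instead work directly with the stabiliser chain of an irredundant base: you locate the last insoluble term $G^{(j)}$, observe that $G^{(j)}$ acts on the orbit of $\omega_{j+1}$ with soluble kernel (it sits inside the soluble $G^{(j+1)}$) and image in $\S{m}$ with $m = [G^{(j)}:G^{(j+1)}]$, and conclude $m \geq 5$ because $\S{4}$ is soluble. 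Then $|G| \geq 5 \cdot 2^{\I-1}$ and you finish as in part (ii). What this buys you: a uniform treatment of all three parts via the chain, and independence from Burnside's theorem --- you only need the elementary fact that $\S{4}$ is soluble. The paper's argument is a touch shorter but rests on a deeper input. Both are valid; your version arguably reads as more self-contained.
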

\begin{proof}
Let $a$ be the number of prime divisors of $|G|$, counting multiplicity. Since $G$ is insoluble there exists a prime greater than $2^2$ dividing $|G|$, and so $|G| > 2^{a+1}$. It is clear that $\I(G) \leq \ell(G) \leq a$, and so Part (i) follows, and we assume from now on that $G$ is transitive.

Let ${\l}=\I(G)$ with a corresponding base $\Lambda = ( \omega_1, \ldots,  \omega_{\l})$. 
Since $G$ is transitive, $[G^{(0)}:G^{(1)}] =\t$ by the Orbit-Stabiliser Theorem. From $ [G\s{i-1}:G\s{i}] \geq 2 $ for $2 \leq i \leq {\l}$, it follows that $|G| \geq 2^{l-1}n$. Hence if $n \geq 5$ then $|G| \geq 2^{l-1}\cdot 5>2^{l+1}$. Therefore by taking logs Part (ii) follows.

Similarly, $|G| < n^b$, and so $ 2^{{\l}-1}\t \leq |G| \leq \t^b$. Hence
$$l-1+\log \t = \log (2^{l-1}\t) \leq \log |G| \leq b\log \t,$$
and so
$${\l} \leq b \log \t - \log \t +1 = (b-1)\log \t+1 $$
and Part (iii) follows.
\end{proof}

\begin{lemma}\label{2.2.6} Let $G$ be a subgroup of $\Sym{\Omega}$, let $\l \geq 1$ and let $\Lambda =(\lambda_1, \ldots, \lambda_{\l}) \in \Omega^{\l}$. Then there exists a subsequence $\Sigma$ of $\Lambda$ such that $\Sigma$ can be extended to an irredundant base and $G_{\Sigma} = G_{\Lambda}$.
\end{lemma}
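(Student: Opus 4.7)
The strategy is to extract $\Sigma$ from $\Lambda$ by a greedy left-to-right scan that discards any entry which does not strictly shrink the current stabiliser, and then to extend the resulting irredundant subsequence to an irredundant base by repeatedly adjoining a moved point.

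More precisely, I would define inductively a sequence of indices $1 \leq i_1 < i_2 < \cdots < i_k \leq \l$ as follows. Set $H_0 = G$. Having chosen $i_1, \ldots, i_{j-1}$ (and set $H_{j-1} := G_{\lambda_{i_1}, \ldots, \lambda_{i_{j-1}}}$), let $i_j$ be the smallest index $i > i_{j-1}$ such that $(H_{j-1})_{\lambda_i} < H_{j-1}$; if no such index exists, stop. Put $\Sigma = (\lambda_{i_1}, \ldots, \lambda_{i_k})$. By construction the chain $G > G_{\lambda_{i_1}} > G_{\lambda_{i_1}, \lambda_{i_2}} > \cdots > G_{\Sigma}$ is strictly descending, so $\Sigma$ is irredundant in the sense of the definition in Section \ref{intro} (viewed as a sequence before requiring triviality of the final stabiliser).

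Next I would verify $G_{\Sigma} = G_{\Lambda}$ by a straightforward induction on $\l$. The point is that any index $i$ discarded by the greedy rule satisfies $(H_{j-1})_{\lambda_i} = H_{j-1}$ for the appropriate $j$, where $H_{j-1}$ equals the stabiliser of the kept points lying weakly to the left of $i$; hence appending $\lambda_i$ to the kept prefix does not change the stabiliser, and iterating we obtain $G_{\Sigma} = G_{\Lambda}$.

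Finally, to extend $\Sigma$ to an irredundant base, I would argue as follows. If $G_{\Sigma} = 1$ then $\Sigma$ is already an irredundant base and there is nothing to do. Otherwise there exists $\omega \in \Omega$ with $(G_{\Sigma})_\omega < G_{\Sigma}$, namely any point moved by $G_{\Sigma}$; appending such a point preserves irredundancy and strictly decreases the stabiliser. Since $G$ is finite, iterating this process must terminate with trivial stabiliser, producing an irredundant base that extends $\Sigma$. There is no real obstacle here; the only thing to be careful about is bookkeeping in the inductive verification that $G_{\Sigma} = G_{\Lambda}$, which is where the greedy choice of indices does its work.
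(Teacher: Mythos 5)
Your proposal is correct and takes essentially the same approach as the paper: the paper likewise deletes the entries of $\Lambda$ whose inclusion does not strictly shrink the running stabiliser, notes that this leaves $G_\Lambda$ unchanged, and concludes that the resulting strictly decreasing chain extends to an irredundant base. Your greedy left-to-right scan is just a slightly more explicit, algorithmic phrasing of the same deletion step, together with a more careful spelling-out of why the kept-prefix stabilisers agree with the full-prefix stabilisers.
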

\begin{proof} The sequence $\Lambda$ cannot be extended to an irredundant base if and only if there exists a subsequence $\lambda_i, \ldots, \lambda_{i+j}$ of $\Lambda$ with $j \geq 1$ such that 
$$ G\s{i} = G\s{i+1} = \cdots = G\s{i+j}. $$
Let $\Sigma$ be the subsequence of $\Lambda$ given by deleting all such $\lambda_{i+1}, \ldots, \lambda_{i+j}$. Since $G\s{i} = G\s{i+j}$ it follows that $G_{\Lambda} = G_{\Sigma}$.
\end{proof}
The following describes the relationship between the irredundant base size of a group and that of a subgroup. 
\begin{lemma}\label{3comb} Let $H$ and $G$ be subgroups of $ \S{\t},$ with $H \leq G$. Then the following hold.
\begin{enumerate}[label=\rm{(\roman*)}]
\item $\I(H) \leq \I(G)$. \label{subgroup}
\item If $H \trianglelefteq G$ then $\I(G) \leq \I(H) +\ell(G/H).$ \label{normal}
\item If $H \trianglelefteq G$ and $[G:H]$ is prime, then 
$\I(H) \leq \I(G) \leq \I(H)+1.$ \label{2.3.8}
\end{enumerate}
\end{lemma}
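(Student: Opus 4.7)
The plan is to handle Part (ii) as the main content; Parts (i) and (iii) will then follow quickly. For Part (i), I would take an irredundant base $(\omega_1,\ldots,\omega_k)$ for $H$ of length $k=\I(H)$ and consider the corresponding stabilizer chain $G^{(i)}:=G_{\omega_1,\ldots,\omega_i}$ in $G$. Writing $H^{(i)}:=H\cap G^{(i)}$, each strict drop $H^{(i-1)}>H^{(i)}$ forces $G^{(i-1)}>G^{(i)}$, since otherwise intersection with $H$ would give $H^{(i-1)}=H^{(i)}$. If $G^{(k)}\neq 1$, extend the sequence by successively adjoining points moved by a non-identity element of the current stabilizer, terminating (in finitely many steps) when the stabilizer is trivial. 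The result is an irredundant base for $G$ of length at least $k$, giving $\I(G)\geq \I(H)$.

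For Part (ii), take an irredundant base $\Lambda=(\omega_1,\ldots,\omega_m)$ for $G$ with $m=\I(G)$. Since $H\leq G$, the equality $H_\Lambda \leq G_\Lambda = 1$ shows $\Lambda$ is also a base for $H$, so Lemma \ref{2.2.6} produces a subsequence $\Sigma$ of $\Lambda$ with $H_\Sigma=1$ that extends to an irredundant base for $H$; in particular $|\Sigma|\leq \I(H)$. A glance at the proof of Lemma \ref{2.2.6} shows that $\Sigma$ is obtained by deleting exactly those $\omega_j$ for which $H^{(j-1)}=H^{(j)}$, so it suffices to bound the number of such $j$ by $\ell(G/H)$.

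The key step is to show that every such $j$ forces a strict drop in the projected chain $\bar G^{(i)}:=G^{(i)}H/H$ inside $G/H$. If instead $\bar G^{(j-1)}=\bar G^{(j)}$, i.e.\ $G^{(j-1)}H=G^{(j)}H$, then a direct manipulation yields $G^{(j-1)}=G^{(j)}H^{(j-1)}$, and since $G^{(j)}\cap H^{(j-1)} = H^{(j)}$ an order count gives
$$[G^{(j-1)}:G^{(j)}] \;=\; [H^{(j-1)}:H^{(j)}].$$
The irredundance of $\Lambda$ in $G$ would then force $H^{(j-1)}>H^{(j)}$, contradicting the assumption on $j$. Hence each such $j$ contributes a strict inclusion to a chain of subgroups in $G/H$, so the total number of such $j$ is at most $\ell(G/H)$. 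Combining, $\I(G)=m\leq |\Sigma|+\ell(G/H)\leq \I(H)+\ell(G/H)$.

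Part (iii) is then immediate: Part (i) gives $\I(H)\leq \I(G)$, and Part (ii) combined with $\ell(G/H)=1$ for $[G:H]$ prime gives $\I(G)\leq \I(H)+1$. The only genuinely delicate point I anticipate is the order-count step in Part (ii) that equates $[G^{(j-1)}:G^{(j)}]$ with $[H^{(j-1)}:H^{(j)}]$ whenever $G^{(j-1)}H=G^{(j)}H$; once this is in hand, everything else is routine chain bookkeeping.
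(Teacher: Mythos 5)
Your proposal is correct, and Parts (i) and (iii) are argued exactly as the paper does: Part (i) via extending an irredundant base for $H$ to one for $G$ (the key observation being that $G^{(i-1)}=G^{(i)}$ would force $H^{(i-1)}=H^{(i)}$ by intersection), and Part (iii) as an immediate consequence of (i) and (ii).

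For Part (ii), however, the paper simply cites \cite[Lemma 2.8]{GLS} and proves nothing, whereas you supply a complete, self-contained argument. Your argument is sound: from $G^{(j-1)}H=G^{(j)}H$ one gets $G^{(j-1)}=G^{(j)}H^{(j-1)}$ by the usual trick (write $g=g'h$ with $g'\in G^{(j)}$, note $h=(g')^{-1}g\in G^{(j-1)}\cap H=H^{(j-1)}$), and since $H\trianglelefteq G$ forces $G^{(j)}$ to normalize $H^{(j-1)}$, the product formula together with $G^{(j)}\cap H^{(j-1)}=H^{(j)}$ yields $[G^{(j-1)}:G^{(j)}]=[H^{(j-1)}:H^{(j)}]$. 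Irredundancy of $\Lambda$ in $G$ then rules out $H^{(j-1)}=H^{(j)}$, so every deleted index registers a strict drop in the chain $\bar G^{(0)}\geq\cdots\geq\bar G^{(m)}$ inside $G/H$, giving the bound $\ell(G/H)$ on the number of deletions. What your version buys is transparency: the reader sees precisely why the deficit $\I(G)-\I(H)$ is controlled by the lattice of $G/H$, rather than having to consult the external reference. The only point worth making explicit, which you elide, is that $H\trianglelefteq G$ is needed both for $G/H$ to exist and for $H^{(j-1)}\trianglelefteq G^{(j-1)}$, so that $G^{(j)}H^{(j-1)}$ is a subgroup; this is easily checked and does not affect correctness.
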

\begin{proof}An irredundant base for $H \leq G$ can be extended to an irredundant base for $G$, so Part (i) is clear. Part (ii) is \cite[Lemma 2.8]{GLS} and Part (iii) follows immediately from Parts (i) and (ii). \end{proof}
\section{Groups with socle $\PSL_d(q)$ acting on subspaces}\label{pglmspaces}
Throughout this section, let $q = p^f$ for $p$ a prime and $f \geq 1$, let $V$ be a $\n$-dimensional vector space over $\F_q ,$ let $\Omega=\PG_m(V)$ be the set of all $m$-dimensional subspaces of $V$, and let $\t = |\Omega|$.
In this section we begin by proving Theorem \ref{PGLthm} which bounds $\I(\PGL_{\n}(q), \Omega)$ in terms of $\n$ and $m$.
By finding lower bounds on $|\Omega|$ we then prove Proposition \ref{g01}, which bounds $\I(\PGamL_{\n}(q), \Omega)$ in terms of $|\Omega|$.
\subsection{Bounds as a function of $\n$ and $m$}
In this subsection we prove the following theorem, which in the case $m=1$ recovers the lower bounds found by Lod\'{a} in \cite{Bianca}. 
\begin{theorem}\label{PGLthm}
Let $\PGL_{\n}(q)$ act on $\Omega$. Then 
$$\I(\PGL_{\n}(q)) \leq (m+1)\n-2m+1, $$
and 
$$\I(\PGL_{\n}(q)) \geq \begin{cases}
m\n-m^2+1 & \text{ if } q = 2,\\
(m+1)\n -m^2 & \text{ if } q \neq 2.
\end{cases}$$
\end{theorem}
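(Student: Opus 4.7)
The plan is to prove the upper and lower bounds separately, using rather different techniques for each.

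For the upper bound, let $\Lambda = (W_1, \ldots, W_l)$ be an irredundant base with associated strictly descending stabiliser chain $G^{(0)} > G^{(1)} > \cdots > G^{(l)} = 1$ in $G = \PGL_{\n}(q)$. Consider the auxiliary flag $U_i := W_1 + \cdots + W_i$, with dimensions $u_i := \dim U_i$ satisfying $u_i - u_{i-1} \in \{0, 1, \ldots, m\}$ and $u_l \leq \n$. Partition the indices $\{1, \ldots, l\}$ into ``expanding'' steps, where $u_i > u_{i-1}$, and ``stable'' steps, where $W_i \subseteq U_{i-1}$. The total dimension increase across expanding steps is at most $\n$, while each stable step corresponds to a new $m$-subspace of $U_{i-1}$ that must strictly refine the previous stabiliser. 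Since $G^{(i)}$ preserves the flag $0 \subset U_1 \subset \cdots \subset U_i$ along with each $W_j$ inside it, $G^{(i)}$ is contained in the parabolic subgroup $P_i$ stabilising this flag, whose Levi quotient is a product of general linear groups of total dimension $\n$. I would bound the contribution of each type of step using the structure of $P_i$ together with the known length of subgroup chains in $\GL$, and sum to obtain $(m+1)\n - 2m + 1$.

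For the lower bound, construct explicit bases. Fix a basis $e_1, \ldots, e_{\n}$ of $V$ and begin with the $\n - m + 1$ sliding-window subspaces $W_i := \langle e_i, e_{i+1}, \ldots, e_{i+m-1} \rangle$; after fixing all of these, the stabiliser is reduced to a torus-like subgroup (essentially the diagonal subgroup modulo scalars, with a small amount of extra freedom at the ends of the window chain). When $q \neq 2$, append further $m$-subspaces spanned by vectors of the form $e_1 + \lambda e_j$ combined with appropriate basis vectors, each chosen to successively kill one remaining torus entry, reaching total length $(m+1)\n - m^2$. When $q = 2$ the scalar group is trivial, so no torus-killing phase is needed beyond linear independence, yielding the shorter $m\n - m^2 + 1$.

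The main obstacle is two-fold. In the lower bound, one must verify strict irredundancy at each step of the explicit construction, particularly in the torus-killing phase where each new subspace needs to introduce a genuinely new constraint on the stabiliser not already implied by previous ones. In the upper bound, the delicate part is handling the stable phase: a priori each stable step could drop the stabiliser by only a small factor, so one needs the precise structure of subgroup chains inside $P_i$, controlled by chain lengths in products of general linear groups, combined with careful accounting of how expanding and stable steps interleave in order to reach the stated constant.
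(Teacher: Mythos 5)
Your upper bound strategy has a fundamental obstruction: it relies on "the known length of subgroup chains in $\GL$", but subgroup chain lengths in $\GL_{\n}(q)$ (and in parabolics $P_i$) grow with $f = \log_p q$, whereas the claimed bound $(m+1)\n-2m+1$ is independent of $q$. Bounding the "stable" steps by $\ell(P_i)$ or $\ell$ of a Levi factor will therefore produce a $\log q$-dependent error term that you cannot absorb. The paper's proof avoids this by replacing the group chain $G^{(0)} > G^{(1)} > \cdots$ with a chain $M_0 > M_1 > \cdots > M_l = \F_q I$ of \emph{linear subspaces} of the $\n\times\n$ matrix algebra $M$, where $M_k$ is the set of linear maps leaving each of $\omega_1,\ldots,\omega_k$ (and $0$) invariant. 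Since each step drops $\dim_{\F_q} M_k$ by at least $1$ and $\sum_k(\dim M_k - \dim M_{k+1}) = \n^2-1$ is $q$-independent, the whole argument lives in a $q$-free world. The delicate part then becomes proving lower bounds on the individual dimension drops $\d_k = \dim M_k - \dim M_{k+1}$ in terms of the flag jump $b_k = \dim\langle\omega_1,\ldots,\omega_{k+1}\rangle - \dim\langle\omega_1,\ldots,\omega_k\rangle$ (namely $\d_k \geq \max\{1, b_k(\n-m)\}$ for $k\geq 2$, with sharper expressions for $k=0,1$), and then optimising over the possible multiset of jumps. Your flag $U_i$ plays a similar bookkeeping role, but without the algebra trick you have no $q$-independent accounting tool for the stable steps.

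Your lower bound construction is also quantitatively wrong for $m\geq 2$. After fixing the $\n-m+1$ sliding windows $W_i = \langle e_i,\ldots,e_{i+m-1}\rangle$, the common stabiliser in $\GL_{\n}(q)$ is not a torus: it fixes the coordinate lines $\langle e_m\rangle,\ldots,\langle e_{\n-m+1}\rangle$ pointwise as $1$-spaces but only preserves a flag in each of the outer $m\times m$ blocks, so its dimension (as a variety, equivalently $\dim_{\F_q}M_{\n-m+1}$) is $m^2-m+\n$, giving $m^2-m+\n-1$ once you pass to $\PGL$. Each subsequent point of your base can decrease this dimension by at least $1$, so the total length attainable by \emph{any} continuation of the sliding window is at most $(\n-m+1)+(m^2-m+\n-1) = m^2-2m+2\n$. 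This is strictly less than $(m+1)\n-m^2$ as soon as $\n > 2m$ (e.g. $m=2$, $\n=5$ gives $10 < 11$), so the sliding-window prefix is already irrecoverably wasteful: it burns through too much of the $\n^2-1$ dimension budget in its first few steps. The paper's explicit sequence is more parsimonious — for $1\leq k\leq m+1$ it uses the $m$-element subsets of $\{e_1,\ldots,e_{m+1}\}$, then for $m+2 \leq k \leq m\n-m^2+1$ subspaces obtained by swapping one $e_s$ ($s\leq m$) for one $e_r$ ($r\geq m+2$), and only then moves to the $e_1+e_t$ phase to kill the remaining scalars (when $q>2$); irredundancy is verified at each step using transvections $T(x,y)=I+E_{x,y}$. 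You will need a construction with that flavour — each step should drop $\dim M_k$ by exactly $1$ as often as possible — to actually reach $(m+1)\n-m^2$.
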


We begin by proving the upper bound in Theorem \ref{PGLthm}.
Let $M=M(V)$ be the algebra of all linear maps from $V$ to itself. Furthermore, let $\omega_0 = \langle \underline{0} \rangle$, let $\l >1$ be an integer, let $ \Lambda = (\omega_1, \omega_2, \ldots, \omega_{\l}) \in \Omega^{\l}$ and for $0 \leq \k \leq \l$, let
$$M_{\k} = \{g \in M \; | \; \omega_i g \leq \omega_i \; \text{for} \; 0 \leq i \leq \k\}, \text{ so that } M_0=M.$$
For $0 \leq k \leq l-1$, it is easily verified that $M_{k+1}$ is a subspace of $M_{k}$. Now assume in addition that
\begin{equation}\label{rewrite} M_0>M_1> \cdots >M_l = \F_q I
\end{equation}
with $\l$ as large as possible. Fix a basis $\langle e_1, \ldots, e_{\n} \rangle$ of $V$ which first goes through $\omega_1 \cap \omega_2$, then extends to a basis of $\omega_1$, and then for each $\k \geq 2$ extends successively to a basis of $\langle \omega_1, \ldots, \omega_{\k} \rangle$. Therefore there exist integers
$$ m = \a_1 \leq \cdots \leq \a_{\l} = d \; \text{ such that } \; \langle e_1, \ldots, e_{\a_{\k}} \rangle = \langle \omega_1, \ldots, \omega_k \rangle.$$
Since $\omega_0= \langle \underline{0} \rangle$, we may let $\a_0=0$. From now, on we identify $M$ with the algebra of $\n \times \n$ matrices over $\F_q$ with respect to this basis.\medskip

We will show that $\l \leq (m+1)\n-2m+1$, from which the upper bound in Theorem \ref{PGLthm} will follow. For $0 \leq k \leq l-1$, let
$$\d_{\k} = \dim(M_{\k}) - \dim(M_{\k+1})$$
and let $b_k=a_{k+1}-a_{k}$ so that $0 \leq b_k \leq m$.
In the following lemmas we consider the possible values of $\d_{\k}$ based on $b_{\k}$.
\begin{lemma}\label{space1+2} Let $\d_{\k}$ and $b_{\k}$ be as above. \begin{enumerate}[label=\rm{(\roman*)}]
\item The dimension of $M_1$ is $ \n^2-m(\n-m),$ and so $\d_0 = m(\n-m)$.
\item $\d_1 =\b_1(\n-\b_1) $.
\end{enumerate}
\end{lemma}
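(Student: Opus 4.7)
The plan is to translate the defining conditions of each $M_{\k}$ into vanishing conditions on the matrix entries of $g$ with respect to the fixed basis $e_1, \ldots, e_{\n}$, so that both parts reduce to counting linearly independent linear constraints.

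For (i), since $\a_1 = m$, we have $\omega_1 = \langle e_1, \ldots, e_m\rangle$, and the condition $\omega_1 g \leq \omega_1$ is equivalent to $e_i g \in \omega_1$ for each $i \leq m$, i.e.\ to $g_{ij} = 0$ for all $(i,j)$ with $i \leq m < j$. These are $m(\n-m)$ independent linear conditions on $M_0 \cong \F_q^{\n^2}$, so $\dim M_1 = \n^2 - m(\n-m)$ and $\d_0 = m(\n-m)$.

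For (ii), I would first extract combinatorial data from the basis. Let $c = \dim(\omega_1 \cap \omega_2)$. Because the basis passes first through $\omega_1 \cap \omega_2$, then through $\omega_1$, and then extends to $\langle \omega_1, \omega_2\rangle$, we have $\omega_1 \cap \omega_2 = \langle e_1, \ldots, e_c\rangle$ and $\omega_2 = \langle e_1, \ldots, e_c, e_{m+1}, \ldots, e_{m+\b_1}\rangle$, and a dimension count gives $\b_1 = m - c$. Then $g \in M_1$ lies in $M_2$ iff $e_i g \in \omega_2$ for every basis vector $e_i$ of $\omega_2$, and I would split into two cases. For $i \leq c$: the $M_1$ condition already forces $e_i g \in \omega_1$, so the new requirement forces $e_i g \in \omega_1 \cap \omega_2 = \langle e_1, \ldots, e_c\rangle$, contributing $c\b_1$ new vanishing entries $g_{ij}$ with $c < j \leq m$. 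For $m < i \leq m+\b_1$: row $i$ was unconstrained by $M_1$, and $e_i g \in \omega_2$ forces $g_{ij} = 0$ for the $\n-m$ columns $j \notin \{1,\ldots,c,\, m+1,\ldots,m+\b_1\}$, contributing $\b_1(\n-m)$ new vanishing entries. Summing and substituting $c = m - \b_1$ gives
$$\d_1 \;=\; c\b_1 + \b_1(\n-m) \;=\; \b_1(c + \n - m) \;=\; \b_1(\n - \b_1).$$

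The main care needed is in part (ii), where one must avoid double counting: the rows $c < i \leq m$ of $g$ are constrained by $M_1$ but receive no additional constraint from $M_2$ (since $e_i \notin \omega_2$ for such $i$), while the rows $m < i \leq m + \b_1$ are unconstrained by $M_1$ and receive a full new batch of vanishing conditions. Once the two groups of rows are separated, the new conditions set genuinely distinct matrix entries to zero, so linear independence is automatic and the dimension count goes through.
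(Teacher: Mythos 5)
Your proposal is correct and takes essentially the same approach as the paper: both identify $M_1$ and $M_2$ by vanishing conditions on matrix entries relative to the adapted basis. The only cosmetic difference is in part (ii), where the paper writes out the full block shape of a matrix in $M_2$ and computes $\dim M_2$ from scratch before subtracting, whereas you count the new linearly independent constraints added in passing from $M_1$ to $M_2$ (the $c\b_1$ entries in rows $1,\ldots,c$ and the $\b_1(\n-m)$ entries in rows $m+1,\ldots,m+\b_1$) and sum them directly; both yield $\d_1 = \b_1(\n-\b_1)$.
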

\begin{proof}
First consider Part (i). Since $\omega_1 = \langle e_1, \ldots, e_m \rangle$ it follows that $g=(g_{ij}) \in M_1$ if and only if $e_ig \in \omega_1$ for $1 \leq i \leq m$. Equivalently, $g_{ij} = 0$ for $1 \leq i \leq m$ and $m+1 \leq j \leq \n$. Hence $\dim(M_1) =  \n^2-m(\n-m)$, and the final claim follows from $\dim(M_0)=\n^2$.

Now consider (ii). The subspace $M_2$ contains all matrices of shape
$$\begin{pmatrix}
x_1 & 0 & 0 & 0\\
x_2 & x_3 & 0 & 0\\
x_4 & 0 & x_5 & 0\\
y_1 & y_2 & y_3 & y_4 
\end{pmatrix} $$
where $x_1,$ $x_3$ and $x_5$ are square with $m-\b_1$, $\b_1$ and $\b_1$ rows respectively.
Hence
\begin{align*}
\dim(M_2) &= (m-\b_1)^2+2\b_1(m-\b_1)+2\b_1^2+(\n-m-\b_1)\n\\
&=\n^2-m(\n-m)-\b_1(\n-\b_1),
\end{align*}
and the result follows from Part (i).
\end{proof}
\begin{lemma}\label{dimt} Let $\k \geq 2$. Then $\d_{\k} \geq \max\{1, \b_{\k}(\n-m)\}$.
\end{lemma}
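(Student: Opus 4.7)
The plan is to interpret the quantity $\delta_k$ as the dimension of the image of a natural linear map, and then build enough elements of $M_k$ to force the image to be large. Consider the linear map
$$\phi \colon M_k \to \mathrm{Hom}(\omega_{k+1}, V/\omega_{k+1}), \qquad \phi(g)(v) = vg + \omega_{k+1}.$$
An element $g \in M_k$ lies in the kernel of $\phi$ exactly when $\omega_{k+1} g \leq \omega_{k+1}$, so $\ker \phi = M_{k+1}$ and hence $\delta_k = \dim \mathrm{image}(\phi)$. The bound $\delta_k \geq 1$ is then immediate from the chain condition \eqref{rewrite}: $M_{k+1}$ is a proper subspace of $M_k$, so $\phi$ is not identically zero. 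The rest of the argument is devoted to showing $\dim \mathrm{image}(\phi) \geq b_k(n-m)$.

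For the main inequality I would exhibit $b_k(n-m)$ linearly independent elements in the image. Using the basis construction before the lemma, we may assume that the newly added vectors $e_{a_k+1},\dots,e_{a_{k+1}}$ extending $\langle \omega_1,\dots,\omega_k\rangle$ to $\langle \omega_1,\dots,\omega_{k+1}\rangle$ lie in $\omega_{k+1}$; together with a basis $h_1,\dots,h_{m-b_k}$ of $\omega_{k+1}\cap \langle \omega_1,\dots,\omega_k\rangle$, these form a basis of $\omega_{k+1}$. Fix vectors $w_1,\dots,w_{n-m}\in V$ whose images form a basis of $V/\omega_{k+1}$. For each $1\leq i\leq b_k$ and $1\leq j\leq n-m$, define $g_{ij}\in M$ by $e_{a_k+i}g_{ij} = w_j$ and $e_s g_{ij} = 0$ for all $s \neq a_k+i$.

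Next I need to verify $g_{ij}\in M_k$ and compute $\phi(g_{ij})$. Every $\omega_l$ with $1 \leq l \leq k$ is contained in $\langle e_1,\dots,e_{a_k}\rangle$, which $g_{ij}$ annihilates (note $\omega_0$ is trivial), so $\omega_l g_{ij} = 0 \leq \omega_l$ and therefore $g_{ij}\in M_k$. Evaluating $\phi(g_{ij})$ on the basis of $\omega_{k+1}$ gives $\phi(g_{ij})(e_{a_k+i}) = \bar w_j$, $\phi(g_{ij})(e_{a_k+i'}) = 0$ for $i'\neq i$, and $\phi(g_{ij})(h_{j'}) = 0$ for all $j'$.

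Finally, linear independence of the $\phi(g_{ij})$ follows by evaluating a purported relation $\sum_{i,j} c_{ij}\phi(g_{ij})=0$ on the basis vectors $e_{a_k+i_0}$: this yields $\sum_j c_{i_0,j}\bar w_j=0$, and linear independence of the $\bar w_j$ forces $c_{i_0,j}=0$ for every $i_0$ and $j$. Hence $\dim\mathrm{image}(\phi) \geq b_k(n-m)$, completing the bound. The main conceptual step is recognising $\delta_k$ as the dimension of the image of $\phi$; the only small obstacle is bookkeeping the basis so that the explicit maps $g_{ij}$ genuinely stabilise every $\omega_l$ with $l\leq k$, which is why the hypothesis $k \geq 2$ is harmless (the arguments for $k=0,1$ have already been handled in Lemma \ref{space1+2}).
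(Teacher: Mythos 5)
Your proof is correct, and it takes a somewhat different route from the paper. The paper decomposes $M_{\k}$ as $X_{\k} \oplus Y_{\k}$, where $X_{\k}$ is the set of $g \in M_{\k}$ annihilating $e_{\a_{\k}+1}, \ldots, e_{\n}$ and $Y_{\k}$ is the set of $g \in M$ annihilating $e_1, \ldots, e_{\a_{\k}}$; it then computes $\dim Y_{\k} = \n(\n-\a_{\k})$ exactly, so that $\d_{\k} = (\dim X_{\k} - \dim X_{\k+1}) + \b_{\k}\n$, and bounds $\dim X_{\k+1} - \dim X_{\k} \leq \b_{\k}m$ by observing that each new basis vector $e_{\a_{\k}+i}$ can only be sent into the $m$-dimensional $\omega_{\k+1}$ by elements of $M_{\k+1}$. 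You instead identify $\d_{\k}$ as $\dim \operatorname{image}(\phi)$ for the natural restriction map $\phi\colon M_{\k} \to \operatorname{Hom}(\omega_{\k+1}, V/\omega_{\k+1})$ and build $\b_{\k}(\n-m)$ explicit witnesses $g_{ij}$ in $M_{\k}$ whose $\phi$-images are visibly independent. Both arguments rest on the same key observation (any $g$ annihilating $e_1, \ldots, e_{\a_{\k}}$ automatically stabilises each $\omega_l$ with $l \leq \k$), and the bound $\b_{\k}(\n-m) = \b_{\k}\n - \b_{\k}m$ is the same in each; the difference is that the paper obtains it via an upper bound on a complementary quantity while you obtain it constructively, which is arguably more transparent about where the factor $\n-m$ comes from. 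One cosmetic note: you write $n-m$ where the paper's convention would require $d-m$ (the paper sets $\n := d$ for the dimension of $V$ and $\t := n$ for $|\Omega|$, so $n$ has already been reserved); the mathematics you intend is clearly $\dim(V/\omega_{\k+1}) = d-m$.
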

\begin{proof} For $0 \leq \k \leq \l$ we define two subspaces of $M_{\k}$, namely
\begin{align*}
X_{\k} &= \{g \in M_{\k} \; | \; e_ig=0 \text{ for } \a_{\k}+1 \leq i \leq \n \}  \text{ and}\\
Y_{\k} &= \{g \in M \; | \; e_ig=0 \text{ for } 1 \leq i \leq  \a_{\k}\}.
\end{align*} 
We begin by showing that 
\begin{equation}\label{Mt} M_{\k} = X_{\k} \oplus Y_{\k} \;\;\;  \text{and} \;\;\; \dim(Y_{\k})=\n(\n-\a_{\k}). 
\end{equation}
  
It is clear that $X_{\k} \cap Y_{\k} = \{0_M\}$.
Let $g =(g_{ij}) \in M_k$. Then there exist $x=(x_{ij}),y=(y_{ij}) \in M$ with $x_{ij}=g_{ij}$ and $y_{ij}=0$ for $i \leq \a_{\k}$, and $x_{ij}=0$ and $y_{ij} = g_{ij}$ for $i \geq \a_{\k}+1$. Then $g=x+y$ with $x \in X_{\k}$ and $y \in Y_{\k}$, hence $M_{\k} = X_{\k} \oplus Y_{\k}$. 
Since $g \in Y_{\k}$ if and only if $g_{ij}=0$ for $i \leq \a_{\k}$, it follows that $\dim({Y_{\k}})=\n(\n-\a_{\k})$. Hence \eqref{Mt} holds.\medskip

Our assumption that $M_{k} > M_{k+1}$ implies that $\d_{\k} \geq 1$, so we may assume that $b_{k} \geq 1$.
By \eqref{Mt}, 
\begin{align*}
\d_{\k} &=\dim(M_{\k})-\dim(M_{\k+1})\\
&= \big( \dim(X_{\k})+\dim(Y_{\k}) \big) -\big( \dim(X_{\k+1})+\dim(Y_{\k+1}) \big) \\
&= \dim(X_{\k})-\dim(X_{\k+1})+\n(\n-\a_{\k}) -\n(\n-\a_{\k+1}) \\
&= \dim(X_{\k})-\dim(X_{\k+1})+\b_{\k} \n. 
\end{align*}
We now bound $\dim(X_{\k}) - \dim(X_{\k+1} )$. By choice of basis $\omega_{k+1}=\langle u_1, \ldots, u_{m-b_k}, e_{a_k+1}, \ldots, e_{a_{k}+b_{k}} \rangle$ for some $u_1, \ldots, u_{m-\b_k} \in \langle \omega_1, \ldots, \omega_k \rangle$. Hence if $v \in   \{ e_{\a_{\k}+1}, \ldots , e_{\a_{\k}+\b_{\k}}\}$ then $\langle v \rangle M_{\k+1} \leq \omega_{\k+1}$. Therefore $\langle v M_{\k+1} \rangle$ has dimension at most $m$, and so $\dim(X_{\k+1}) \leq \dim(X_{\k}) +\b_{\k} m$. Hence
$$\d_{\k} =\dim(X_k)-\dim(X_{k+1})+\b_{\k}\n \geq - \b_{\k} m +\b_{\k} \n =\b_{\k} (\n-m).$$
\end{proof}

\noindent \textit{Proof of upper bound of Theorem \ref{PGLthm}.} We shall show that $ \l \leq (m+1)\n-2m+1$, from which the result  will follow, since $\I(\PGL_{\n}(q), \Omega)=\I(\GL_{\n}(q), \Omega),$ and $\GL_d(q)$ is a subgroup of $M$.

For $0 \leq \b \leq m$, let  
$$\C_{\b} = \big\{\k \in \{0, \ldots, \l-1 \} \; \big| \; \b_{\k}=\b \big\}.$$
and let $\c_{\b} = |\C_{\b}|$. 
Then
\begin{equation}\label{max} \l = \sum_{\b=0}^{m} \c_{\b}. 
\end{equation}
Since $a_l=d$ and $\a_0=0$ it follows that
\begin{equation}\label{c0c1}
\n = \a_{\l} - \a_0 = \sum_{\k=0}^{\l-1} (\a_{\k+1}-\a_{\k})= \sum_{k=0}^{l-1}\b_{k} =\sum_{\b=0}^m \b\c_{\b} =\sum_{\b=1}^m \b \c_{\b}.
\end{equation}
Since $\a_1=m$ and $\a_0=0$ it follows that $b_0=m$, so $0 \in \C_m,$ and $\c_m \geq 1$. Since $\omega_1 \neq \omega_2$ it follows that $\b_1 \neq 0$, and $1 \in \C_{\b_1}$, so
\begin{equation}\label{crcm} \c_{\b_1} \geq 1 \;\;\;\;\; \text{and} \;\;\;\;\; \c_m \geq 1+\delta_{m\b_1}.
\end{equation}
Lemmas \ref{space1+2} and \ref{dimt} yield
\begin{equation}\label{7}
\d_0=m(\n-m), \; \d_1=\b_1(\n-\b_1)=\b_1(m-\b_1)+\b_1(\n-m), \; \text{and} \; \d_{\k} \geq \max\{1,b_{k}(\n-m)\} \text{ for $k \geq 2.$}
\end{equation}

Since $M_0=M$ and $M_{\l} = \F_q I$ it follows from the definition of $\d_{\k}$ that
\begin{align*}
\n^2-1 &= \dim(M_0)-\dim(M_{\l})\\
&= \sum_{\k=0}^{\l-1} \Big( \dim(M_{\k})-\dim(M_{\k+1}) \Big)\\
&= \sum_{\k=0}^{\l-1} \d_{\k}\\
&= \sum_{\k \in \C_0} \d_{\k} + \d_1 + \sum_{\k \in \C_{\b_1} \backslash\{1\}} \d_{\k}  + \sum_{\k \notin \C_0 \cup \C_{\b_1}} \d_{\k}\\
&\geq \sum_{\k \in \C_0} 1 +\b_1(m-\b_1)+\b_1(\n-m) + \sum_{\k \in \C_{\b_1} \backslash\{1\}} \b_1(\n-m)  + \sum_{\k \notin \C_0 \cup \C_{\b_1}} \b_{\k}(\n-m) \;\;\;\;\;\; \text{ by } \eqref{7}\\
&= \c_0 +\b_1(m-\b_1)+ \sum_{\k \in \C_{\b_1} } \b_1(\n-m)  + \sum_{\k \notin \C_0 \cup \C_{\b_1}} \b_{\k}(\n-m)\\
& = \c_0+ \b_1(m-\b_1) +\sum_{\k \notin \C_0 } \b_{\k}(\n-m)\\
&= \c_0+\b_1(m-\b_1)  +(\n-m) \sum_{\b=1}^m \b\c_{\b} \\
&= c_0+\b_1(m-\b_1)  +(\n-m)\n \;\;\;\;\;\; \text{ by \eqref{c0c1}.}
\end{align*}
By rearranging we find that
\begin{equation}\label{newc0} \c_0 \leq  m\n-\b_1(m-\b_1)-1.
\end{equation}

We bound $\I(G)$ by maximizing $\l=\sum_{b=0}^m c_b$ subject only to Equations \eqref{c0c1}, \eqref{crcm} and \eqref{newc0}.
By \eqref{c0c1} an upper bound on $\sum_{\b=0}^m  \c_{\b}$ is given by maximizing $c_0$, maximizing $\c_{\b}$ for $\b$ small and minimizing $\c_{\b}$ for $\b$ large. Hence we substitute $\c_0=m\n-\b_1(m-\b_1)-1$ by \eqref{newc0}, substitute $\c_{\b}=0$ for $\b \notin \{0,1,\b_1,m\}$, and maximise $c_1$ and minimise $c_m$ subject to \eqref{crcm}.

First let $m=1$. Since $\b_1 \neq 0$ it follows that $\b_1=1$, and hence $c_1=d$ by \eqref{c0c1}. Now let $m \geq 2$.
Then there are three possibilities for $\b_1$. If $\b_1=m$ then to minimise $c_m$ subject to \eqref{crcm} let $c_m=2$, and so \eqref{c0c1} yields $c_1=\n-2m$. If $\b_1=1$ then $c_m=1$, and \eqref{c0c1} yields $c_1=\n-m$. Otherwise $c_m=c_{\b_1}=1$, and \eqref{c0c1} yields $c_1=\n-m-\b_1$. Hence in all cases
$$|C_1\cup C_{\b_1} \cup C_m| = 2+\n-m-\b_1.$$
Therefore
$$\sum_{\b=0}^m \c_{\b}  \leq  \big({m}\n-\b_1(m-\b_1)-1 \big) +2+\n-{m}-\b_1 = ({m}+1)\n-m+1 -\b_1(m-\b_1+1).$$
Hence if $\sum_{\b=0}^m \c_{\b}$ is maximal then $\b_1(m-\b_1+1)$ is minimal subject to $1 \leq \b_1 \leq m$. Therefore $\b_1$ is $1$ or $m$, and so
$$\sum_{\b=0}^m \c_{\b} \leq ({m}+1)\n-2{m}+1.$$
The result now follows from \eqref{max}.
\hfill \qed
\\

We now consider the lower bounds in Theorem \ref{PGLthm}.\medskip

\noindent \textit{Proof of lower bound of Theorem \ref{PGLthm}.} Let $G = \GL_n(q)$. Here we give a sequence of $m$-spaces of $V$ such that each successive point stabilizer in $G$ is a proper subgroup of its predecessor. Its length is therefore a lower bound on $\I(\PGL_n(q), \Omega)$.

For $1 \leq k \leq m\n-m^2+\n$, we define the following three variables
$$r_{k} = \Big\lfloor \frac{k-2}{m}\Big\rfloor +m+1 , \;\;\;\;\;\;\; s_{k} = m-( k-2 \bmod m), \;\;\;\;\;\;\; \text{ and }  \;t_{k}=k-m\n+m^2.$$
Notice that if $m+2 \leq k \leq m\n-m^2+1$, then 
\begin{equation}\label{ajbj} m+2 \leq r_k \leq d \;\;\;\; \text{and} \;\;\;\; 1 \leq s_k\leq m,
\end{equation}
whilst $t_k \leq d$ for all $k$ and 
$$ 2 \leq t_{k} \leq m+1  \;\;\;\;\;\; \text{ if and only if } \;\;\;\;\;\;   m\n-m^2+2 \leq k \leq m\n-m^2+m+1.$$

Hence the following sets $W_k$ of $m$ linearly independent vectors of $V$ are well defined.
$$W_{k} = \begin{cases}
\big\{ e_i \; | \; i \in \{1, \ldots, m+1\} \backslash \{m+2-k\} \big\} & \text{ for } 1 \leq k \leq m+1,\\
\big\{ e_i \; | \; i \in \{1, \ldots, m, r_{k}\} \backslash \{s_{k}\} \big\} & \text{ for } m+2 \leq k \leq m\n-m^2+1,\\
\big\{ e_1+e_{t_{k}}, e_i \; | \; i \in \{2, \ldots, m+1\} \backslash \{t_{k}\} \big\} & \text{ for } m\n-m^2+2 \leq k \leq m\n-m^2+m+1,\\
\big\{ e_1+e_{t_{k}}, e_i \; | \; i \in \{2, \ldots,m\}  \big\} & \text{ for } m\n-m^2+m+2 \leq k \leq m\n-m^2+\n.
\end{cases}$$

Let $\omega_k = \langle W_k \rangle \in \Omega$ and let $G^{(k)}=G_{\omega_1, \ldots, \omega_k}$.
For $1 \leq x,y \leq \n$ let $T(x,y)$ be the matrix $I + E_{x,y}$ and let $\sup_{x}(W_k)$ be the set of vectors in $W_k$ which are non-zero in position $x$. Recall that
$$e_iT(x,y) = \begin{cases} e_i+e_y & \text{if }i=x,\\
e_i & \text{otherwise.} \end{cases}$$
Hence if a vector $v$ is zero in position $x$, then $vT(x,y)=v$. Therefore $\omega_kT(x,y)=\omega_k$ if and only if $\sup_x(W_k)T(x,y)\subseteq \omega_k$. In particular, if $\sup_x(W_k)=\emptyset$ then $\omega_kT(x,y)=\omega_k$. Furthermore, $T(x,y) \in G$ unless $q=2$ and $x=y$.\smallskip

It is clear that $G > G\s{1}$, so let $k \in \{2, \ldots, m\n-m^2+1\}$ and let $j \leq k$. We shall show that there exist $x$ and $y$ such that $\omega_{k}T(x,y) \neq \omega_{k}$ and $\omega_{j} T(x,y)=\omega_{j}$ for all $j <k$. Hence $T(x,y) \in G\s{k-1} \backslash G\s{k}$ and so $G\s{k-1} > G\s{k}$.\medskip

First consider $k \in \{2,\ldots, m+1\},$ and let $T=T(m+1,m+2-k) $. Then $\sup_{m+1}(W_1) = \emptyset$, and for $1 < j \leq k$
$$\sup_{m+1}(W_j)T = \{e_{m+1}\}T=\{e_{m+1}+e_{m+2-k}\}. $$
Hence $\sup_{m+1}(W_j)T \subseteq \omega_j$ if and only if $j \neq k$. Therefore $\omega_{j} T = \omega_{j}$ for $j < k$, and $\omega_{k} T \neq \omega_{k}.$

Next consider $k \in \{m+2, \ldots, md-m^2+1\}$. Hence \eqref{ajbj} holds, and so we may let $T$ be the matrix $T(r_k, s_k)$.
If $j \leq m+1$ or if $r_{j} \neq r_k$, then $\sup_{r_{k}}( W_{j}) = \emptyset$ and so $\omega_{j} T= \omega_{j}.$ Therefore assume that $j \geq m+2$ and $r_{j} = r_k$. Then
$$\sup_{r_{k}}(W_j)T = \{e_{r_{k}} \}T = \{e_{r_{k}} +e_{s_{k}}\}.$$
Since $(r_j,s_j)=(r_k,s_k)$ if and only if $j=k$, it follows that $\sup_{r_k}(W_j)T \subseteq \omega_j$ if and only if $j \neq k$. Therefore $\omega_{j} T= \omega_{j}$ for $j < k$, and $\omega_{k} T\neq \omega_{k}.$
Hence $G\s{k-1} > G\s{k}$ for $1 \leq k \leq m\n-m^2+1$, and so if $q=2$ then the result follows.\medskip

It remains to consider $q>2$ and $k \geq m\n-m^2+2 $. Let $T=T(t_k,t_k)$ and let \\
$u \in \{e_i, e_1+e_i \; | \; 1 \leq i \leq \n\}$. Then
$$uT
 = \begin{cases} e_1+2e_{t_{k}} & \text{ if } u=e_1+e_{t_{k}},\\
 2u & \text{ if } u=e_{t_{k}},\\
u & \text{ otherwise.} \end{cases}$$
If $1 \leq j \leq md-m^2+1$ then $W_j \subseteq \{e_1, \ldots, e_{\n}\}$, and if $md+m^2+1 < j < k$ then $W_j \subseteq \{e_1+e_{t_j}, e_1, \ldots, e_{\n}\}$ with $t_j \neq t_k$. Hence, if $j <k$ then $\sup_{t_k}(W_j)T \subseteq \omega_j$, and so $\omega_jT = \omega_j$.
Since $e_1+e_{t_{k}}  \in \omega_{k}$ but $e_{1}+2e_{t_{k}} \notin \omega_{k}$ it follows that $\omega_{k} T(t_{k},t_{k}) \neq \omega_{k}.$ Hence $G\s{k-1} > G\s{k}$ for $1 \leq k \leq m\n-m^2+d$, and so the result follows.
\hfill \qed
\begin{remark} The interested reader may wish to check, using the notation of the previous proof, that the following holds. Let $\Lambda=(\omega_i)_{2 \leq i \leq m \n-m^2+1}$ if $q=2$, and let $\Lambda=(\omega_i)_{m+1 \leq i \leq m \n-m^2+\n}$ if $q>2$. Then $\Lambda$ is a minimal base for the action of $\PGL_{\n}(q)$ on $\PG_m(q)$. Hence
$$\B(\PGL_{\n}(q), \PG_{m}(q)) \geq \begin{cases} 
md-m^2 & \text{if $q=2$,}\\
(m+1)d-m^2-m & \text{if $q \neq 2$.}
\end{cases}$$
\end{remark}
\subsection{Upper bounds as a function of $|\Omega|$}
Our main result in this subsection is Proposition \ref{g01}, which bounds $\I(\PGL_{\n}(q), \Omega)$ as a function of $n=|\Omega|$, rather than of $m$, $\n$ and $q$. We begin by bounding the size of $\Omega=\PG_m(\F_q^d)$.

\begin{lemma}\label{omegasize} Let $\t(\n,m,q) = |\PG_m(\F_q^{\n})|$. Then
$$\log |\Omega| = \log\big( \t(\n,m,q) \big) > \begin{cases}   \frac{\n^2}{4}+\frac{1}{2} & \text{if $q=2$ and $m= \frac{\n}{2}\geq 2$,}\\
{m(\n-m)} \log q & \text{for all $m$ and $q$.}
\end{cases} $$
\end{lemma}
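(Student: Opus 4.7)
The plan is to work directly with the Gaussian binomial coefficient formula
$$|\Omega| = \binom{\n}{m}_q = \prod_{i=0}^{m-1} \frac{q^{\n - i} - 1}{q^{m - i} - 1},$$
valid for $1 \leq m \leq \n - 1$, and extract both bounds from a factor-by-factor analysis of this product.

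For the general bound (second case), the key step is to show that each individual factor satisfies $\frac{q^{\n - i} - 1}{q^{m - i} - 1} > q^{\n - m}$; after cross-multiplying this reduces to $q^{\n - m} > 1$, which holds whenever $m < \n$. Multiplying the $m$ factors together gives $|\Omega| > q^{m(\n - m)}$, and taking $\log_2$ yields the claim.

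For the refined bound (first case), with $q = 2$ and $m = \n/2 \geq 2$, I would re-index by setting $j = m - i$ and rewrite each factor as $2^m \cdot \frac{1 - 2^{-(m+j)}}{1 - 2^{-j}}$. This isolates the main term $2^{m^2} = 2^{\n^2/4}$ and leaves a correction
$$P = \prod_{j=1}^{m} \frac{1 - 2^{-(m+j)}}{1 - 2^{-j}}$$
in which every factor strictly exceeds $1$ (since $m + j > j$). To produce the extra $\sqrt{2}$, I would bound $P$ below by its $j = 1$ factor alone; this factor equals $2 - 2^{-m}$, which is at least $7/4$ for $m \geq 2$, and $7/4 > \sqrt{2}$. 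Combining this with the fact that the remaining factors of $P$ are each $\geq 1$ yields $P > \sqrt{2}$, hence $|\Omega| > 2^{\n^2/4 + 1/2}$, and taking $\log_2$ gives the first case.

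I do not anticipate any substantial obstacle: the argument is entirely elementary and both bounds fall out cleanly from the Gaussian binomial formula. The only conceptual point worth flagging is that the refined case requires only a constant-factor (namely $\sqrt{2}$) improvement over the general bound in the diagonal situation $q = 2$, $m = \n/2$, and this slack is already available from the very first correction factor, so no delicate estimate on the Euler-type product $\prod_{j \geq 1}(1 - 2^{-j})$ is needed.
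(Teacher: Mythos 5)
Your proof is correct, and it takes a genuinely different route from the paper. For the general bound $\log|\Omega| > m(d-m)\log q$, the paper simply cites Lod\'a's thesis (\cite[Lemma~4.2.8]{Bianca}) rather than giving an argument, whereas you prove it from scratch by showing each factor $\frac{q^{d-i}-1}{q^{m-i}-1}$ of the Gaussian binomial coefficient exceeds $q^{d-m}$ (a one-line cross-multiplication). For the refined bound at $q=2$, $m=d/2\ge 2$, the paper runs an induction on $k=m+1$: it verifies the base case $\t(4,2,2)=35>2^{4.5}$ and then peels off the quotient $\frac{(2^{2k}-1)(2^{2k-1}-1)}{(2^k-1)^2}$, bounding it below by $2^{2k-1}$ to push the induction through. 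You instead make a single clean decomposition $|\Omega|=2^{m^2}\cdot P$ with $P=\prod_{j=1}^{m}\frac{1-2^{-(m+j)}}{1-2^{-j}}$, observe every factor exceeds $1$, and extract the extra $\sqrt 2$ from the $j=1$ factor alone, which equals $2-2^{-m}\ge 7/4>\sqrt 2$. Your approach is self-contained (avoiding the external citation), non-inductive, and makes transparent exactly where the constant $\tfrac12$ of slack comes from; the paper's induction, while correct, obscures this. One small caveat shared by both proofs: the strict inequality $\log|\Omega|>m(d-m)\log q$ requires $1\le m\le d-1$ (for $m\in\{0,d\}$ both sides vanish), which is implicit in the paper's standing hypotheses but worth noting if you state the lemma in isolation.
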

\begin{proof}
If either $q \neq 2$ or $m \neq \frac{\n}{2}$ then the result holds by \cite[Lemma 4.2.8]{Bianca}, so assume otherwise and let $k=\frac{d}{2}+1=m+1 \geq 3$. The statement is then equivalent to $n(2k-2,k-1,2)>2^{(k-1)^2+\frac{1}{2}}$. 

We now induct on $k$. Since $\t(4,2,2) = 35 >  2^{2^2+\frac{1}{2}}$, the result holds for $k=3$.
Now
\begin{align*} \t(2k,k,2)&=  \frac{(2^{2k}-1)(2^{2k-1}-1)(2^{2k-2}-1) \cdots (2^{k+1}-1)}{(2^{k} -1)(2^{k-1}-1)(2^{k-2}-1) \cdots (2-1)}\\
&=  \frac{(2^{2k}-1)(2^{2k-1}-1)}{(2^{k} -1)^2} \cdot \frac{(2^{2k-2}-1) \cdots (2^{k+1}-1)(2^k-1)}{(2^{k-1}-1)(2^{k-2}-1) \cdots (2-1)}\\
&= \frac{(2^{2k}-1)(2^{2k-1}-1)}{(2^{k} -1)^2} \cdot \t(2k-2,k-1,2)\\
& \geq \frac{(2^{2k}-1)(2^{2k-1}-1)}{(2^k-1)^2} \cdot 2^{(k-1)^2+\frac{1}{2}}, \;\;\;\; \text{by induction.}
\end{align*}
It is easily verified that
$$(2^{k}+1)(2^{2k-1}-1) = 2^{3k-1}+2^{2k-1}-2^{k}-1 > 2^{3k-1}-2^{2k-1}= 2^{2k-1}(2^{k}-1).$$
Hence 
$$\frac{ (2^{2k}-1)(2^{2k-1}-1)}{(2^{k}-1)^2}2^{(k-1)^2} = \frac{ (2^{k}+1)(2^{2k-1}-1)}{(2^{k}-1)} 2^{(k-1)^2} > \frac{2^{2k-1}(2^k-1)}{(2^k-1)}2^{(k-1)^2}=  2^{k^2},$$
and the result follows.
\end{proof}
Recall that $q=p^f$ with $p$ prime, and $\Omega = \PG_m(\F_q^d)$, with $n=|\Omega|$.
\begin{proposition}\label{g01} Let $G = \PGamL_{\n}(q)$ and assume that $m \leq \frac{d}{2}$. Then
$$\I(G, \Omega) \leq \begin{cases}
2(\n-1) +1 \;\;\;\;\;\;\;\; \;\;\;\;\;\;\;\; \;\;\;\;\leq 2 \log \t +1 & \text{ if $m=1$ and $q=2,$}\\
\frac{4}{3}(\n-1)\log q  +1+ \log f  \leq \frac{4}{3} \log \t +1+ \log f  & \text{ if $m=1$ and $q \geq 3,$}\\
{\frac{\n^2}{2}+ 1} \;\;\;\;\;\;\; \;\;\;\;\;\;\;\;\;\;\;\;\;\; \; \;\;\;\;\;\;\;  \leq 2 \log \t & \text{ if $m=\frac{\n}{2} \geq 2$ and $q=2$,}\\ 
2 m(\n-m)\log q+ \log f  \;\; \leq 2 \log \t+ \log f  & \text{ otherwise.}
\end{cases}$$
\end{proposition}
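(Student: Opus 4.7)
The plan is to combine the upper bound on $\I(\PGL_{\n}(q), \Omega)$ from Theorem \ref{PGLthm} with the normal-subgroup machinery of Lemma \ref{3comb}(ii), and then convert the resulting bounds into terms of $\log n$ using Lemma \ref{omegasize}. The key set-up is to take $H = \PGL_{\n}(q)$, which is normal in $G = \PGamL_{\n}(q)$ with quotient isomorphic to a subgroup of $\mathrm{Gal}(\F_q/\F_p) \cong C_f$. Since a cyclic group of order $f$ has subgroup chain length at most $\log f$, Lemma \ref{3comb}(ii) combined with Theorem \ref{PGLthm} immediately yields
$$\I(G,\Omega) \;\leq\; (m+1)\n - 2m + 1 + \log f. \qquad (\ast)$$

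Next I would handle each case of the proposition in turn, using $(\ast)$ as the common starting point. When $m=1$ and $q=2$, we have $f=1$ and $(\ast)$ gives $\I(G,\Omega) \leq 2\n-1 = 2(\n-1)+1$ directly. When $m=1$ and $q \geq 3$, $(\ast)$ gives $2\n-1+\log f$; I then observe that $q \geq 3$ forces $\log q \geq 3/2$, so $2(\n-1) \leq \tfrac{4}{3}(\n-1)\log q$, yielding the stated bound. When $m=\n/2 \geq 2$ and $q=2$, plugging into $(\ast)$ gives exactly $\n^2/2+1$. In the remaining ``otherwise'' case I would verify $(m+1)\n - 2m + 1 \leq 2m(\n-m)\log q$ by elementary arithmetic in two sub-regimes: (i) $q=2$ with $2 \leq m < \n/2$, which after rearrangement becomes $2m^2 - 2m + 1 \leq (m-1)\n$ and follows from $m\geq 2$ and $\n \geq 2m+1$; and (ii) $q \geq 3$ with $2 \leq m \leq \n/2$, where the factor $\log q \geq \log 3 > 3/2$ is easily sufficient.

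Finally I would convert each bound into a bound in terms of $\log n$ via Lemma \ref{omegasize}. For $m=1$, use $n = (q^{\n}-1)/(q-1) \geq q^{\n-1}$ to get $(\n-1)\log q \leq \log n$. For $m=\n/2 \geq 2$ and $q=2$, the sharper bound $\log n > \n^2/4 + 1/2$ of Lemma \ref{omegasize} yields $\n^2/2+1 < 2\log n$, which is precisely why this case needs its own treatment. In the ``otherwise'' case the generic bound $\log n > m(\n-m)\log q$ from Lemma \ref{omegasize} directly gives $2m(\n-m)\log q < 2\log n$. The main obstacle is purely bookkeeping: one needs to track which regime of Theorem \ref{PGLthm} and which regime of Lemma \ref{omegasize} is sharp simultaneously, and the $q=2$, $m=\n/2$ case is genuinely special because the generic estimate $m(\n-m)\log q = \n^2/4$ is not quite big enough to swallow the additive ``$+1$'' without the sharper $\tfrac{1}{2}$ improvement from Lemma \ref{omegasize}.
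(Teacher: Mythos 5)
Your proposal is correct and follows essentially the same route as the paper: start from $\I(G) \leq \I(\PGL_{\n}(q)) + \ell(C_f) \leq (m+1)\n - 2m + 1 + \log f$ via Lemma \ref{3comb}(ii) and Theorem \ref{PGLthm}, then split into the same cases and convert to $\log n$ using Lemma \ref{omegasize}. The only difference is a slight reorganization of the case analysis in the ``otherwise'' branch (and a marginally different rearrangement of the elementary inequality $(m+1)\n-2m+1 \leq 2m(\n-m)$), which is cosmetic.
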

\begin{proof} Since $G = \PGL_{\t}(q) \rtimes C_f$, Lemma \ref{3comb}\ref{normal} and Theorem \ref{PGLthm} imply that 
\begin{equation}\label{col} \I(G) = \I(\PGL_{\t}(q))+\ell(C_f) \leq (m+1)\n-2m+1+\log f. 
\end{equation}

First let $m=1$, so that $\I(G) \leq 2(\n-1)+1+\log f$. By Lemma \ref{omegasize} $(\n-1)\log q< \log \t$. Hence the result is immediate for $q=2$, and follows from $\log q > \frac{3}{2}$ for $q \geq 3$.

Now let $m=\frac{\n}{2} \geq 2$ so that $\I(G) \leq  \frac{\n^2}{2}+1+\log f$. If $q=2$ then $ \frac{\n^2}{4} + \frac{1}{2}< \log \t $ by Lemma \ref{omegasize}, and so the result follows. If $q \geq 3$, then it follows from $\n \geq 4$ that $1 \leq \frac{\n^2}{4}$, and so 
$$\frac{\n^2}{2}+1 \leq   \frac{3\n^2}{4}  < \frac{\n^2}{2} \log q=2m(\n-m) \log q.$$
Therefore 
$$\I(G) \leq  2m(\n-m) \log q + \log f < 2 \log \t + \log f,$$
by Lemma \ref{omegasize}.
Finally consider $1< m < \frac{\n}{2}$. Then $1 \leq \n-2m$, and so 
$$\n-2m+1 \leq 2(\n-2m)\leq m(\n-2m).$$
Hence by \eqref{col}
$$\I(G)-\log f \leq m\n+\n-2m+1 \leq m\n+ m(\n-2m) = 2m(\n-m) \leq  2m(\n-m)\log q.$$
Therefore, $\I(G) \leq 2m(\n-m)\log q + \log f\leq 2\log \t + \log f,$ by Lemma \ref{omegasize}.
\end{proof}
\section{Almost simple groups}\label{ASgroups}
In this section we prove Theorem \ref{main} for almost simple groups. More precisely, we prove the following result.
\begin{theorem}\label{ASlem} Let $G$ be an almost simple primitive subgroup of $\S{n}$. If $G$ is not large base then 
$$\I(G, \Omega) <5 \log n-1.$$
\end{theorem}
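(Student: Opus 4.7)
The plan is a case analysis on the socle $T$ of $G$, using the O'Nan--Scott / Aschbacher classification to split into manageable subfamilies. Two tools do the heavy lifting: Lemma \ref{fands}\ref{insol} gives $\I(G) < \log|G| - 1$, which already beats $5 \log n - 1$ whenever $\log|G| / \log n < 5$; and Proposition \ref{g01} supplies the bound $\I(G) \leq 2 \log n + \log f + O(1)$ for subspace actions of groups with socle $\PSL_d(q)$. Since Theorem \ref{PGLthm} shows that these subspace actions account for the families in which $\I(G)$ really grows linearly in $\log n$, between them these two inputs should handle every case.

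For $T = A_m$ with $m \geq 5$ acting primitively but not in large-base fashion, the point stabiliser is not a Young subgroup of the standard large-base form, so by classical minimum-degree estimates (e.g.\ Mar\'{o}ti) the degree $n$ grows essentially as $m!$ in logarithm; hence $\log|G|/\log n$ is bounded by a small constant, and Lemma \ref{fands}\ref{insol} finishes the case. For $T$ sporadic there are only finitely many primitive actions, and for each the chain-length bound $\I(G) \leq \ell(G)$ together with the prime factorisation of $|T|$ suffices; in particular $M_{24}$ on $24$ points satisfies $\ell(M_{24}) \leq 17 < 5 \log 24 - 1$. For $T$ an exceptional group of Lie type, known lower bounds on the minimum degree of primitive actions (Vasilev, Liebeck--Saxl) imply $\log|G| \leq c \log n$ with $c < 5$, and Lemma \ref{fands}\ref{insol} again applies.

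For $T$ a classical group, the point stabiliser $H$ lies in one of the geometric Aschbacher classes or is almost simple. If $H$ is not a stabiliser of a subspace of the natural module, the degree of the action is large enough compared to $|G|$ that $\log|G| < 5 \log n$, again by the minimum-degree estimates. The remaining case is the subspace action: here we pass from $G$ to $G \cap \PGamL_d(q)$ via Lemma \ref{3comb}\ref{normal} at a cost of at most $\ell(G/(G \cap \PGamL_d(q))) = O(1)$, and then either apply Proposition \ref{g01} directly (for $T = \PSL_d(q)$) or, for $T$ symplectic, unitary or orthogonal, embed the given action of $G$ into the $m$-subspace action of $\PGamL_d(q)$ (or $\PGamL_d(q^2)$ in the unitary case) via Lemma \ref{3comb}\ref{subgroup} to inherit the same bound.

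The main obstacle will be the subspace actions of the symplectic, unitary and orthogonal groups: here $\Omega$ consists only of totally singular or non-degenerate subspaces, which form a proper subvariety of $\PG_m(V)$. Consequently the $n = |\Omega|$ appearing in the theorem can be significantly smaller than the $|\PG_m(V)|$ governing Proposition \ref{g01}, so the effective ratio $\I(G)/\log n$ for the action in question can exceed the bound the proposition gives in the ambient projective geometry. Quantifying this loss carefully, and absorbing the $\log f$ field-automorphism term, will be the technical crux; small-field configurations ($q \in \{2,3\}$) need to be checked separately to confirm that the constant $5$ is achieved with room to spare.
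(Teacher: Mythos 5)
Your decomposition is by socle type, whereas the paper's is by the standard/non-standard dichotomy of Definition~\ref{standdefn}: for non-standard actions the paper invokes the base size theorem of Burness et al.\ ($\bb(G)\leq 6$ except $\mathrm{M}_{24}$), feeds it into Lemma~\ref{fands}\ref{fourth} to get $\I(G)\leq (b-1)\log n+1$, and then disposes of the short list of groups with $\bb(G)=6$ by checking $|H|<[G:H]^4$ directly; this handles alternating-on-cosets, sporadic, exceptional, and classical non-subspace actions in one stroke, which your plan instead re-derives case by case.

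The critical gap is the alternating case. For $T=A_m$ acting on partitions of $\{1,\dots,m\}$ into two parts of size $m/2$ — primitive, not large base, and not a $k$-set action — the degree is $n=\tfrac12\binom{m}{m/2}$, so $\log n\approx m$ while $\log|G|\approx m\log m$. The ratio $\log|G|/\log n\approx\log m$ is therefore unbounded, contradicting your claim that it is a small constant, and Lemma~\ref{fands}\ref{insol} gives only $\I(G)\lesssim m\log m$, far short of the required $5\log n\approx 5m$. The paper avoids this by citing a direct estimate $\I(G,\Omega)<2\log|\Omega|$ for partition actions (\cite[Lemma 6.6]{GLS}) that does not route through $\log|G|$. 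Two further points you must address: the subspace actions of symplectic, unitary and orthogonal socles, which you correctly flag as the crux, are resolved in the paper via the sharper bound $\tfrac12 m(d-m)\log q<\log n$ on the orbit of isotropic/non-degenerate $m$-spaces (Proposition~\ref{otherclassical}, using \cite[Lemma 7.14]{GLS}) rather than the ambient $|\PG_m(V)|$ estimate, and you also omit the two families where $G$ does not embed in $\PGamL_d(q)$ at all, namely $\mathrm{P\Omega}_8^+(q)$ containing triality and $\Sp_4(2^f)'$ containing a graph automorphism, which the paper treats separately (Lemma~\ref{nov2} and \cite[Lemma 6.12]{GLS}).
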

We begin with two definitions which we shall use to divide this section into cases.
\begin{definition}\label{subspacesg} Let $G$ be almost simple with socle $G_0$, a classical group with natural module $V$. A subgroup $H$ of $G$ not containing $G_0$ is a \textit{subspace subgroup} if for each maximal subgroup $M$ of $G_0$ containing $H \cap G_0$ one of the following holds.
\begin{enumerate}[label=\rm{(\roman*)}]
\item $M = G_U$ for some proper non-zero subspace $U$ of $V$, where if $G_0 \neq \PSL(V)$ then $U$ is either totally singular or non-degenerate, or if $G$ is orthogonal and $p = 2$ a non-singular 1-space.
\item $G_0 = \Sp_{\n}(2^f)$ and $M \cap G_0 = \mathrm{GO}^{\pm}_{\n}(2^f)$.
\end{enumerate}
A transitive action of $G$ is a \textit{subspace action} if the point stabilizer is a subspace subgroup of $G$.
\end{definition}

\begin{definition}\label{standdefn} Let $G$ be almost simple with socle $G_0$. A transitive action of $G$ on $\Omega$ is \textit{standard} if up to equivalence of actions one of the following holds, and is \textit{non-standard} otherwise.
\begin{enumerate}[label=\rm{(\roman*)}]
\item $G_0 = \A{r}$ and $\Omega$ is an orbit of subsets or partitions of $\{1, \ldots , r\}$.
\item $G$ is a classical group in a subspace action.
\end{enumerate}
\end{definition}
This section is split into three subsections. The first considers $G_0 = \PSL_{\n}(q)$ acting on subspaces and pairs of subspaces. In the second, we deal with the case of $G$ another classical group in a subspace action. Finally in the third we prove Theorem \ref{ASlem}.
\subsection{$G_0 = \PSL_{\n}(q)$}
Let $G$ be almost simple with socle $\PSL_d(q)$, in a subspace action on a set $\Omega$. We shall use the results in Section \ref{pglmspaces} to bound $\I(G, \Omega)$.

We begin with a preliminary lemma.

\begin{lemma}\label{logfbound} Let $1 \leq m \leq \frac{\n}{2}$, let $p$ be a prime, let $f \geq 1$ and let $q=p^f$. Then
$$ m(\n-m) \log q \geq
 \begin{cases}
   \log f + 1& \text{ if } m=1,\\
 3\log f+4 & \text{ if } m \geq 2.
  \end{cases}
$$
\end{lemma}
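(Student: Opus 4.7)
The plan is to handle the two cases separately, in each case reducing to a clean one-variable inequality in $f$ by exploiting the constraints $m \leq d/2$ and $q = p^f \geq 2^f$.

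First I would record two elementary observations that do all the work. Since $p \geq 2$, we have $\log q = f \log p \geq f$. And since $m \leq d/2$, we have $d - m \geq m$, so $m(d-m) \geq m^2$, giving $m(d-m) \geq 1$ when $m=1$ and $m(d-m) \geq 4$ when $m \geq 2$. Combining these,
\[
m(d-m)\log q \;\geq\; \begin{cases} f & \text{if } m=1,\\ 4f & \text{if } m \geq 2.\end{cases}
\]

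For the case $m = 1$, it therefore suffices to show $f \geq \log f + 1$ for all integers $f \geq 1$, equivalently $2^{f-1} \geq f$. This holds by a trivial induction (base case $f=1$ gives $1 \geq 1$, and doubling the left side beats adding $1$ on the right).

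For the case $m \geq 2$, it suffices to prove $4f \geq 3\log f + 4$ for all integers $f \geq 1$. I would check the base case $f=1$ by hand (it gives $4 \geq 4$) and then show the difference $g(f) := 4f - 3\log f - 4$ is non-decreasing on integers: $g(f+1) - g(f) = 4 - 3\log(1 + 1/f) \geq 4 - 3\log 2 = 1 > 0$ for $f \geq 1$. Hence $g(f) \geq g(1) = 0$ throughout.

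No real obstacle is expected; the only care point is the borderline case $f = 1$ (where $\log f = 0$ and both inequalities hold with equality on the $m \geq 2$ branch), which is why the bounds are stated with the constants $1$ and $4$ rather than something smaller.
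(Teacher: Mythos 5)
Your proof is correct and follows essentially the same path as the paper: both use $\log q = f\log p \geq f$ together with $m(d-m) \geq m^2$ (hence $\geq 4$ when $m \geq 2$), then finish with an elementary inequality in $f$. The only cosmetic difference is in the $m\geq 2$ case, where the paper writes $4\log q \geq 3\log q + 1$ and then reuses $f \geq \log f + 1$, while you instead prove $4f \geq 3\log f + 4$ directly by a monotonicity argument; both close the case with the same amount of work.
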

\begin{proof}
Let $m=1$. Then
$$m(\n-m)\log q = (\n-1) \log q = (\n-1)f\log p \geq f \geq \log f+1.$$
Now let $m \geq 2$, so that $m(\n-m) \geq 4$. Then
$$m(\n-m)\log q \geq 4 \log q \geq 3 \log q +1=3 f \log p +1 \geq 3f+1 \geq 3(\log f +1)+1=3\log f +4$$
as required.
\end{proof}

\begin{proposition}\label{newlem} Let $G$ be almost simple with socle $\PSL_{\n}(q)$ acting on $\Omega = \PG_m(V),$ and let $\t = |\Omega|$. Then 
$$\I(G) < 3 \log \t.$$
\end{proposition}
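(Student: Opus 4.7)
The plan is to reduce directly to Proposition \ref{g01}. Since $G$ is almost simple with socle $\PSL_d(q)$, we have $G \leq \PGamL_d(q)$, so Lemma \ref{3comb}\ref{subgroup} yields $\I(G,\Omega) \leq \I(\PGamL_d(q),\Omega)$. The natural duality on $V$ (inverse-transpose) induces a permutation isomorphism between the actions of $\PGamL_d(q)$ on $\PG_m(V)$ and on $\PG_{d-m}(V)$, so $\I$ agrees on these two actions; consequently I may assume $m \leq d/2$ and invoke Proposition \ref{g01}.

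Proposition \ref{g01} then gives a bound of the shape $A\log n + B + C\log f$ (with small explicit constants) in each of four cases, and the remaining task is to absorb the $\log f$ term into $\log n$ using Lemmas \ref{logfbound} and \ref{omegasize}. In the two cases with no $\log f$ term, namely $m=1,\,q=2$ and $m=d/2\geq 2,\,q=2$, the bounds read $\I \leq 2\log n + 1$ and $\I \leq 2\log n$ respectively, and each is strictly less than $3\log n$ since $n \geq 3$. For $m=1$ and $q \geq 3$, Lemma \ref{logfbound} (with $m=1$) gives $\log f + 1 \leq (d-1)\log q$, and Lemma \ref{omegasize} gives $(d-1)\log q < \log n$, so
$$\I \leq \tfrac{4}{3}\log n + 1 + \log f < \tfrac{4}{3}\log n + \log n = \tfrac{7}{3}\log n.$$
In the remaining case we have $m \geq 2$, so Lemma \ref{logfbound} yields $3\log f + 4 \leq m(d-m)\log q$, and using $m(d-m)\log q < \log n$ from Lemma \ref{omegasize} gives
$$\I \leq 2m(d-m)\log q + \log f \leq \tfrac{7}{3}m(d-m)\log q - \tfrac{4}{3} < \tfrac{7}{3}\log n.$$
In all four cases the bound is strictly less than $3\log n$, as required.

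There is no serious obstacle here: the work amounts to a careful audit of constants across the four regimes of Proposition \ref{g01}. The only mildly subtle point is the duality reduction to $m \leq d/2$, but this is standard since the graph automorphism of $\PSL_d(q)$ normalises $\PGamL_d(q)$ and maps the $m$-subspace action permutation-isomorphically onto the $(d-m)$-subspace action, so $\I$ is unchanged.
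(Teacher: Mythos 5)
Your proposal is essentially the same route as the paper's (reduce to $\PGamL_d(q)$, feed into Proposition \ref{g01}, absorb the $\log f$ term via Lemmas \ref{omegasize} and \ref{logfbound}), but it contains one genuine error at the very first step. You assert that ``Since $G$ is almost simple with socle $\PSL_d(q)$, we have $G \leq \PGamL_d(q)$,'' and this is false: for $d \geq 3$ the outer automorphism group of $\PSL_d(q)$ contains the graph (inverse--transpose) automorphism, which does not lie in $\PGamL_d(q)$. When $m \neq d/2$ (or $d=2$) the claim is nevertheless true, but for a different reason than you give --- the graph automorphism does not preserve $\PG_m(V)$, so a group acting on $\PG_m(V)$ cannot contain it. When $m = d/2 \geq 2$, however, the duality map \emph{does} permute $\PG_{d/2}(V)$, and $G$ may well contain it; in that case $G \not\leq \PGamL_d(q)$ as a subgroup of $\Sym{\Omega}$, and Lemma \ref{3comb}\ref{subgroup} cannot be applied directly. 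Note also that the later duality reduction to $m \leq d/2$ does not repair this: that step replaces $\PG_m$ by $\PG_{d-m}$, which is a separate issue from whether $G$ contains the graph automorphism when $m=d/2$.

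The fix is exactly what the paper does: observe that $[G : G \cap \PGamL_d(q)] \leq 2$, apply Lemma \ref{3comb}\ref{subgroup} and \ref{2.3.8} to get $\I(G) \leq \I(\PGamL_d(q),\Omega) + 1$, and then re-run your case analysis. Your constants have more than enough slack to absorb the extra $+1$: e.g.\ in the $m = d/2 \geq 2$, $q=2$ case you get $\I(G) \leq 2\log n + 1 < 3\log n$, and in the final case $\I(G) < \tfrac{7}{3}\log n - \tfrac{1}{3} < 3\log n$. So the numerics survive, but as written the proposal rests on a false containment and needs this patch. (The rest of the constant-chasing is sound, and in fact gives the sharper intermediate bound $\tfrac{7}{3}\log n$ rather than the paper's direct derivation from $2\log n + \log f + 1$.)
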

\begin{proof}
If $m=1$ then $G \leq \PGamL_{\n}(q)$, so $\I(G) \leq \I(\PGamL_{\n}(q))$ by Lemma \ref{3comb}\ref{subgroup}. Otherwise $G \cap \PGamL_{\n}(q)$ has index at most 2 in $G$, so by Lemma \ref{3comb}\ref{subgroup} and \ref{2.3.8}
$$\I(G) \leq \I(G\cap \PGamL_{\n}(q))+1 \leq  \I( \PGamL_{\n}(q))+1.$$
Therefore we can bound $\I(G)$ by our bound for $\I(\PGamL_{\n}(q))$ when $m=1$, and by one more than that when $m >1$. Thus Proposition \ref{g01} yields
$\I(G) \leq 2 \log \t + \log f  +1$.
Combining Lemmas \ref{omegasize} and \ref{logfbound} gives $\log f +1 \leq m(\n-m)\log q <  \log \t$, hence the result holds.
\end{proof}

We now consider the action of $G$ on the following subsets of $\PG_m(V) \times \PG_{\n-m}(V)$, with $m< \frac{d}{2}:$
\begin{align*}
 \Omega^{\oplus} =& \Big\{ \{U,W\} \; \Big| \; U,W \leq V, \; \dim U =m, \; \dim W = \n-m \text{ with } U \oplus W=V \Big\} ,\\
 \Omega^{\leq} =& \Big\{ \{U,W\} \; \Big| \; U,W \leq V,  \; \dim U =m, \; \dim W = \n-m \text{ with } U  \leq W \Big\}.
\end{align*}
Note that in both cases we require $\n \geq 3$.
\begin{lemma}\label{pairslem} 
Let $G$ be almost simple with socle $\PSL_{\n}(q)$, let $H = G \cap \PGamL_{\n}(q)$ and let $\Omega$ be either $\Omega^{\oplus}$ or $\Omega^{\leq}$. Then 
$$\I(G, \Omega) \leq 2\I(H, \PG_m(V)) +1.$$
\end{lemma}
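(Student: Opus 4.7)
My plan is to prove the lemma by two successive reductions: first from $G$ to $H$, costing the additive $+1$, and then from $\Omega$ to $\PG_m(V)$, paying the multiplicative factor of $2$.

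\textit{Step 1 (from $G$ to $H$).} Since $H \trianglelefteq G$ with $[G:H] \in \{1,2\}$, Lemma~\ref{3comb}\ref{2.3.8} gives $\I(G,\Omega) \leq \I(H,\Omega) + 1$ (with the $[G:H]=1$ case being trivial). It thus suffices to show that $\I(H,\Omega) \leq 2\I(H, \PG_m(V))$.

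\textit{Step 2 (from $\Omega$ to $\PG_m(V)$).} Let $\Lambda = (\{U_1,W_1\},\ldots,\{U_k,W_k\})$ be an irredundant $H$-base of length $k = \I(H,\Omega)$. Because every element of $H \leq \PGamL_d(q)$ preserves dimension and $m < d-m$, the setwise stabilizer of $\{U_i,W_i\}$ in $H$ agrees with the ordered pointwise stabilizer $H_{U_i,W_i}$. Unfolding $\Lambda$ gives the length-$2k$ sequence $\sigma = (U_1,W_1,U_2,W_2,\ldots,U_k,W_k)$ in $\PG_m(V) \cup \PG_{d-m}(V)$; let $K_j$ denote the pointwise $H$-stabilizer of its first $j$ entries. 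Irredundancy of $\Lambda$ forces $K_{2i} \subsetneq K_{2i-2}$, so at each $i$ at least one of the two intermediate containments is strict. Setting $S_U = \{i : K_{2i-1} \subsetneq K_{2i-2}\}$ and $S_W = \{i : K_{2i} \subsetneq K_{2i-1}\}$, we obtain $S_U \cup S_W = \{1,\ldots,k\}$.

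The key sub-claim is that $(U_i)_{i \in S_U}$, viewed as a sequence in $\PG_m(V)$, produces a strictly decreasing chain of $H$-stabilizers: if at some $i' \in S_U$ we had $H_{(U_i)_{i \in S_U,\, i < i'}} = H_{(U_i)_{i \in S_U,\, i \leq i'}}$, then, using the containment $K_{2i'-2} \leq H_{(U_i)_{i \in S_U,\, i<i'}} \leq H_{U_{i'}}$, we would conclude $K_{2i'-1} = K_{2i'-2}$, contradicting $i' \in S_U$. Any such irredundant chain extends to an irredundant $H$-base on $\PG_m(V)$, hence $|S_U| \leq \I(H, \PG_m(V))$, and the symmetric argument gives $|S_W| \leq \I(H, \PG_{d-m}(V))$. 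Using the graph automorphism of $\PGamL_d(q)$ (inverse-transpose on matrices), combined if necessary with $\PGamL_d(q)$-conjugacy via a representative of the nontrivial coset $G \setminus H$ when $[G:H]=2$, one obtains a permutation isomorphism $(H,\PG_m(V)) \cong (H,\PG_{d-m}(V))$, so $\I(H,\PG_{d-m}(V)) = \I(H,\PG_m(V))$. Combining everything yields $k \leq |S_U|+|S_W| \leq 2\I(H,\PG_m(V))$, and together with Step~1 this gives the desired bound.

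The main obstacle is verifying this last symmetry, $\I(H,\PG_m(V))=\I(H,\PG_{d-m}(V))$: one must check that the graph-automorphism-induced bijection between the two subspace-sets really does intertwine the given $H$-actions (or at least intertwines a conjugate action, which suffices for equality of $\I$). This is immediate when $G$ contains a graph-type element, and requires a short separate check using the fact that $\PSL_d(q) \leq H \leq \PGamL_d(q)$ otherwise.
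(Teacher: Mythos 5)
Your proof is correct, and it establishes the same intermediate inequality $\I(H,\Omega)\le \I(H,\PG_m(V))+\I(H,\PG_{d-m}(V))$ that the paper uses, but via a noticeably cleaner mechanism. The paper partitions $\{1,\dots,\ell\}$ asymmetrically: $U_i$ goes into $\Pi'$ exactly when $H_{U_1,\dots,U_{i-1}}>H_{U_1,\dots,U_i}$, and $W_i$ into $\Sigma'$ otherwise; it then proves $|\Pi'|\le\I(H,\PG_m(V))$ directly but proves the bound for $\Sigma'$ by a contradiction argument that intersects stabilizer chains and invokes irredundancy of $\Lambda$. You instead track drops in the \emph{interleaved} stabilizer chain $K_0\ge K_1\ge\cdots\ge K_{2k}$, set $S_U$ and $S_W$ to record where the drop happens, and observe $S_U\cup S_W=\{1,\dots,k\}$ because $K_{2i}\subsetneq K_{2i-2}$. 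The two halves are then handled by the same direct argument (showing $(U_i)_{i\in S_U}$ and $(W_i)_{i\in S_W}$ each carry strictly decreasing stabilizer chains, via the containment $K_{2i'-2}\le H_{(U_i)_{i\in S_U,\,i<i'}}$), giving a symmetric treatment with no contradiction step. The only cost is that $S_U$ and $S_W$ may overlap (so you only get $k\le|S_U|+|S_W|$, whereas the paper's sets partition), but this is harmless since the final bound is the same. Your Step 1 reduction via Lemma~\ref{3comb}\ref{2.3.8} and the final appeal to the permutation isomorphism $(H,\PG_m(V))\cong(H,\PG_{d-m}(V))$ match the paper; your caveat about verifying that isomorphism when $[G:H]=1$ is more careful than the paper, which simply asserts it.
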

\begin{proof} 
We first show that
\begin{equation}\label{edit2} \I(H, \Omega) \leq \I(H,\PG_m(V))+\I(H,\PG_{\n-m}(V)).
\end{equation}

Let $\l=\I(H, \Omega)$ and let $\Lambda = \big(\{U_1, W_1 \}, \ldots , \{U_{\l}, W_{\l} \}\big)$ be a corresponding base, where $\dim(U_i)=m$ for all $i$. Let $\Pi=(U_1, \ldots, U_{\l} ) $ and let $\Sigma = (W_1, \ldots, W_{\l} ) $. Then by Lemma \ref{2.2.6} $\Pi$ and $\Sigma$ contain subsequences which can be extended to irredundant bases for the action of $H$ on $\PG_m(V)$ and $\PG_{d-m}(V)$ respectively.

Let $\Pi'$ be the subsequence of $\Pi$ which contains $U_i$ if and only if $H_{U_1, \ldots, U_{i-1}} > H_{U_1, \ldots, U_{i-1}, U_i}$. Then $\Pi'$ can be extended to an irredundant base for the action of $H$ on $\PG_m(V)$. Let $k=|\Pi'|$, so $\k \leq \I(H, \PG_m(V)).$

Let $\Sigma' = (W_{j_1}, \ldots, W_{j_{(l-k)}})$ be the subsequence of $\Sigma$ which contains $W_i$ if and only if $H_{U_1, \ldots, U_{i-1}} = H_{U_1, \ldots, U_{i-1}, U_i}$. 
Assume, for a contradiction, that $\Sigma'$ cannot be extended to an irredundant base for the action of $H$ on $\PG_{\n-m}(V)$. Since $H$ is irreducible, $H> H_{W_{j_1}}$. Therefore there exists $s \geq 2$ such that 
$$H_{W_{j_1},W_{j_2}, \ldots, W_{j_{(s-1)}}} = H_{W_{j_1}, W_{j_2},\ldots, W_{j_{(s-1)}}, W_{j_s}}.$$
Let $i=j_s$. Then intersecting both sides of the above expression with $H_{W_1, \ldots, W_{i-1}}$ gives
\begin{equation}\label{Weqn} H_{W_1, \ldots,  W_{i-1}} = H_{W_1, \ldots, W_{i-1},W_{i}}. \end{equation}
Since $W_{i} \in \Sigma'$ it follows that 
\begin{equation}\label{Ueqn} H_{U_1, \ldots,  U_{i-1}} = H_{U_1, \ldots, U_{i-1},U_{i}}. \end{equation}
Elements of $H = G \cap \PGamL_{\n}(q)$ cannot map $U_i$ to $W_i$. Therefore \eqref{Weqn} and \eqref{Ueqn} imply that 
$$H_{\{U_1, W_1\}, \ldots, \{U_{i-1}, W_{i-1}\}} = H_{\{U_1, W_1\}, \ldots, \{U_{i-1}, W_{i-1}\}, \{U_{i}, W_{i}\}},$$ 
a contradiction since $\Lambda$ is irredundant. Hence $l-k \leq \I(H,\PG_{n-m}(V))$, and so \eqref{edit2} holds.

The subgroups of $\Sym{\PG_m(V)}$ and $\Sym{\PG_{n-m}(V)}$ representing the actions of $H$ are permutation isomorphic. Therefore \eqref{edit2} implies that $\I(H, \Omega) \leq 2\I(H, \PG_m(V))$. Since $H$ has index at most 2 in $G$, the result follows from Lemma \ref{3comb}\ref{2.3.8}.
\end{proof}
\begin{lemma}\label{g02} Let $\Omega$ be either $\Omega^{\oplus}$ or $\Omega^{\leq}$, and let $\t=|\Omega|$. Let $G$ be an almost simple subgroup of $\Sym{\Omega}$ with socle $\PSL_{\n}(q)$. Then
$$\I(G) <  5 (\log \t-1) .$$
\end{lemma}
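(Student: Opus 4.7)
The plan is to combine Lemma \ref{pairslem} with Proposition \ref{g01}, and then convert the resulting bound into one in terms of $\log \t$ via a direct count of $|\Omega|$ together with Lemma \ref{omegasize}.

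First, writing $H = G \cap \PGamL_{\n}(q)$, Lemma \ref{pairslem} and Lemma \ref{3comb}\ref{subgroup} give
$$\I(G, \Omega) \;\leq\; 2\I(H, \PG_m(V)) + 1 \;\leq\; 2\I(\PGamL_\n(q), \PG_m(V)) + 1,$$
and Proposition \ref{g01} bounds the right-hand side by an explicit function of $\n$, $m$, $q$ and $f$. Since $m < \n/2$ throughout this subsection, only three of the four cases of that proposition are relevant.

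Next I would lower-bound $|\Omega|$ in terms of $|\PG_m(V)|$. For $\Omega^{\oplus}$, after fixing the $m$-space $U$ the number of complements of $U$ in $V$ is exactly $q^{m(\n-m)}$, so $|\Omega^{\oplus}| = |\PG_m(V)|\,q^{m(\n-m)}$. For $\Omega^{\leq}$, fixing $U$, the $(\n-m)$-subspaces of $V$ containing $U$ biject with the $(\n-2m)$-subspaces of $V/U \cong \F_q^{\n-m}$, so $|\Omega^{\leq}| = |\PG_m(V)|\cdot|\PG_m(\F_q^{\n-m})|$. Applying Lemma \ref{omegasize} to each factor gives the lower bound
$$\log \t \;\geq\; \log|\PG_m(V)| + m(\n-2m)\log q$$
in both cases, together with the stronger bound $\log \t \geq 2m(\n-m)\log q$ for $\Omega^{\oplus}$.

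Finally I would split into the three cases matching Proposition \ref{g01}, namely $m=1$ with $q=2$, $m=1$ with $q\geq 3$, and $m \geq 2$ with $m < \n/2$, and verify $\I(G, \Omega) < 5(\log \t - 1)$ directly in each. I expect the tightest situation to be $m \geq 2$ with $m$ close to $\n/2$ on $\Omega^{\leq}$, where the gap $m(\n-2m)\log q$ between $\log \t$ and $\log|\PG_m(V)|$ is smallest; here one must check carefully that the additive $\log f$ term coming from Proposition \ref{g01} is absorbed by the main term, using Lemma \ref{logfbound}. This case analysis is the main obstacle, although no new conceptual ingredient is required beyond the machinery already developed.
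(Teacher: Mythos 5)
Your overall strategy matches the paper's: combine Lemma \ref{pairslem} with Proposition \ref{g01} (and Lemma \ref{3comb}\ref{subgroup}), get a lower bound on $|\Omega|$ in terms of $|\PG_m(V)|$, then close the gap by case analysis keyed to the cases of Proposition \ref{g01}, absorbing the $\log f$ terms via Lemma \ref{logfbound}. Where you diverge is in the lower bound on $|\Omega|$: the paper just uses the crude $\t \geq 2|\PG_m(V)|$ (so $\log\t - 1 \geq \log|\PG_m(V)| > m(\n-m)\log q$), whereas you compute $|\Omega^{\oplus}| = |\PG_m(V)|\,q^{m(\n-m)}$ and $|\Omega^{\leq}| = |\PG_m(V)|\cdot|\PG_m(\F_q^{\n-m})|$ exactly and then apply Lemma \ref{omegasize} to one factor. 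Your formulas are correct, and your bound is sharper than the paper's, which is a mild gain.

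However, you misidentify the critical case. The delicate spot is not $m$ near $\n/2$ but $(\n,q,m)=(3,2,1)$. There $\I(G)\leq 2\bigl(2(\n-1)+1\bigr)+1 = 11$, while for $\Omega^{\leq}$ one has $\t = 21$ and $5(\log 21 - 1)\approx 16.96$: the inequality is true but with little room. If you also replace $\log|\PG_m(V)|$ by its Lemma \ref{omegasize} lower bound $(\n-1)\log q$, you get $\log\t > 2\cdot 1 + 1\cdot 1 = 3$, so $5(\log\t - 1) > 10$, which does \emph{not} dominate $11$ --- the purely symbolic argument fails. Your bound only closes the gap if you retain $\log|\PG_m(V)|$ exactly (then $\log\t > \log 7 + 1$, and $5(\log\t - 1) > 14.03 > 11$). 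In practice you must therefore either isolate $(\n,q)=(3,2)$, $m=1$ as a small case to check numerically (as the paper does, noting $n\in\{21,28\}$ and $\I(G)\leq 11$) or be careful never to weaken $\log|\PG_m(V)|$ to $(\n-1)\log q$ in this regime. Since your write-up flags the wrong case as tight, there is a real risk you would plough through $m=1$, $q=2$ with the weaker bound and produce an inequality that is false for $\n=3$; make sure to handle that base case explicitly. With that fix, the rest of your case analysis should go through much as in the paper.
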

\begin{proof}
Let $H = G \cap \PGamL_{\n}(q)$, then by Proposition \ref{g01} and Lemma \ref{pairslem} 
\begin{equation}\label{14} \I(G) \leq   2 \I(H, \PG_{m})+1  \leq  \begin{cases}
4(\n-1) +3  & \text{ if } m=1 \text{ and } q=2,\\
\frac{8}{3}(\n-1) \log q   + 2 \log f +3 & \text{ if } m=1 \text{ and } q \geq 3,\\
4 {m(\n-m)} \log q + 2\log f +1 & \text{ otherwise}.
\end{cases}
\end{equation}
Since $\t \geq 2|\PG_m(V)|$, Lemma \ref{omegasize} gives 
\begin{equation}\label{2t}
m(\n-m) \log q < \log \frac{\t}{2} =\log \t -1.
\end{equation}
Recall that $d \geq 3$.
First let $m=1$. If $(\n,q)=(3,2)$, then $n \in \{21,28\}$. Therefore by \eqref{14} it follows that $\I(G) \leq 11< 5(\log n-1)$.
Hence if $q=2$ then we may assume that $\n \geq 4$, and so by \eqref{14} and \eqref{2t}
$$\I(G) \leq 4(\n-1)+3 \leq 5(\n-1) < 5 (\log \t-1).$$
Still with $m=1$, let $q \geq 3$. Then
$$\begin{array}{rll}
\I(G) & \leq \frac{8}{3}(\n-1) \log q   + 2 \log f +3 & \text{ by \eqref{14},}\\
& \leq \frac{8}{3}(\n-1) \log q +2(\n-1) \log q+ 1 & \text{ by Lemma \ref{logfbound},}\\
& <  5(\n-1)\log q & \text{ since $1 < \frac{1}{3} (\n-1)\log q $, } \\
& < 5 (\log \t -1)& \text{ by \eqref{2t}.}
\end{array}$$

Finally, let $m \geq 2$. Then $2 \log f +1 \leq m(\n-m)\log q$ by Lemma \ref{logfbound}. Hence by \eqref{14} and \eqref{2t} it follows that
$$\I(G) \leq 4 {m(\n-m)}  \log q +2 \log f+1 \leq 5 {m(\n-m)}  \log q < 5(\log \t -1) .$$
\end{proof}
\subsection{$G_0$ another classical group}

 \begin{lemma}\label{nov2} Let $G$ be almost simple with socle $G_0 = \mathrm{P}\Omega_8^+(q),$ acting faithfully and primitively on a set $\Omega$ of size $n$. Then
 $$\I(G) < 5\log \t-1.$$
 \end{lemma}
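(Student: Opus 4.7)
The plan is to use Lemma~\ref{fands}\ref{sneaky}, which gives $\I(G) \leq \log|G| - 1$ for any transitive $G$ on $n \geq 5$ points; to secure $\I(G) < 5 \log n - 1$ it then suffices to show that $|G| < n^5$. Since
\[
|G_0| \;=\; \tfrac{1}{\gcd(4,q^4-1)}\, q^{12}(q^2-1)(q^4-1)^2(q^6-1) \;\sim\; q^{28},
\]
and $|\Out(G_0)|$ divides $24f$ (with $q=p^f$, the factor $24$ absorbing triality together with the diagonal automorphisms), we have $|G| \leq 24 f \cdot |G_0|$.

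For the lower bound on $n$ I would examine the maximal subgroups of $G_0$ via the Aschbacher-type classification for $\mathrm{P}\Omega_8^+(q)$. Up to triality, the smallest index of a core-free maximal subgroup is
\[
N_1 \;=\; \frac{(q^3+1)(q^4-1)}{q-1} \;\sim\; q^6,
\]
attained by stabilizers of singular $1$-spaces and of each of the two families of maximal totally singular $4$-spaces. For any primitive action of degree $n \geq N_1$, a direct comparison of degrees in $q$ shows that $24 f \cdot |G_0| < N_1^5$ for all $q \geq 2$ (the left side grows as $q^{28} f$, the right as $q^{30}$), yielding $|G| < n^5$ as required.

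The remaining primitive actions---those of degree strictly below $N_1$---arise only from sporadic maximal subgroups for small $q$, such as $\mathrm{Sp}_6(2) \leq \mathrm{P}\Omega_8^+(2)$ giving $n = 120$, or a short list for $q \in \{2,3,4\}$. These I would treat case by case from explicit maximal subgroup data: either the same order calculation applies with the specific $n$, or one invokes a known base size bound $\bb(G) \leq c$ for a small constant $c$ in non-standard actions of classical groups, combined with Lemma~\ref{fands}\ref{fourth}, to give $\I(G) \leq (c-1)\log n + 1$.

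The principal obstacle is securing the strict inequality $<5\log n - 1$ rather than the looser $<5\log n$ of Theorem~\ref{main}: the slack $5\log n - \log|G|$ must exceed $1$ uniformly. This forces careful attention to the $\log(24f)$ contribution for small $q$ with large $f$, and to the exceptional small-degree actions where the generic bound $n \geq N_1$ fails; both issues are finite in scope but must be verified explicitly to close out the argument.
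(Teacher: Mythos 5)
Your strategy is the same one the paper uses: bound $\I(G)$ via Lemma~\ref{fands}\ref{sneaky} by $\log|G|-1$, bound $|G|$ above by roughly $q^{30}$, and bound $n$ below by roughly $q^6$. The paper achieves both bounds by citing results from \cite{GLS} (namely $|G| < 6fq^{28}$ for $q \geq 3$, and $n > q^6$ in general), whereas you propose to rederive them from order formulas and the subgroup structure.

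However, there are two concrete gaps in the way you set this up. First, writing $|G_0| \sim q^{28}$ and $|G| \leq 24f|G_0|$ and then comparing growth as ``$q^{28}f$ versus $q^{30}$'' does not close the argument for small odd $q$: for $q = 3$, $f = 1$ you would need $24 f < q^2 = 9$, which is false. The inequality does hold, but only because when $q$ is odd the factor $\gcd(4,q^4-1) = 4$ in the denominator of $|G_0|$ cancels the diagonal contribution in $|\Out(G_0)|$, giving the sharper $|G| < 6fq^{28}$; after that, $6f < q^2$ is true for every $q \geq 3$. You must keep that factor explicitly rather than absorbing it into a $\sim$.

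Second, you flag but do not resolve the primitive actions of degree below $N_1 = \frac{(q^3+1)(q^4-1)}{q-1}$. This is a genuine issue: for $q = 2$ the action on cosets of $\mathrm{Sp}_6(2)$ has degree $120 < N_1 = 135$, and for $q = 3$ the reducible subgroup $\Omega_7(3)$ already has index $1080 < N_1 = 1120$, so $N_1$ is not the minimal faithful degree in general. The paper avoids having to determine the exact minimal degree by using the weaker, uniform bound $n > q^6$ (which covers $n = 120 > 64$ as well), and this suffices for the calculation $\I(G) < \log q^{30} - 1 = 5\log q^6 - 1 < 5\log n - 1$. Without an analogue of that uniform lower bound, your case-by-case finish for small $q$ would need to be carried out explicitly, and your fallback of invoking a constant base-size bound $\bb(G)\leq c$ plus Lemma~\ref{fands}\ref{fourth} would itself require verifying $(c-1)\log n + 1 < 5\log n - 1$, which is not automatic for small $n$.
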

 \begin{proof} 
Let $q \geq 3$. Then the reader may check that $6f<q^{2}$, and so by \cite[(6.19)]{GLS} 
$$|G|<6fq^{28} \leq q^{30}.$$
If $q=2$, then $|G|  \leq 6|G_0| < q^{30}$ also. Hence by Lemma \ref{fands}\ref{sneaky}, since $n >4$,
$$\I(G) \leq \log q^{30}-1=5 \log q^6-1<5 \log n-1,$$
by {\cite[(6.20)]{GLS}}. 
\end{proof}
\begin{proposition}\label{otherclassical} Let $G$ be almost simple with socle $G_0$, a classical group with natural module $V$. Assume that $G_0 \neq \PSL(V)$ and $G_0 \neq \mathrm{P}\Omega_8^{+}(q)$. Let $0 < m < \n$, let $\Omega$ be a $G$-orbit of totally isotropic, totally singular, or non-degenerate subspaces of $V$ of dimension $m$, and let $\t=|\Omega|$. Then 
$$\I(G, \Omega) < 5 \log \t-1.$$
\end{proposition}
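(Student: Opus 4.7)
The plan is to reduce to the bound on $\I(\PGamL_d(q),\PG_m(V))$ given by Proposition~\ref{g01}, via a chain-transfer argument. Let $\widetilde{G} := \PGamL(V)$ with $V$ viewed over its defining field (so of dimension $d$ over $\F_q$ in the symplectic and orthogonal cases, over $\F_{q^2}$ in the unitary case); then $G \leq \widetilde{G}$ and $\Omega \subseteq \PG_m(V)$. If $(\omega_1,\ldots,\omega_l)$ is an irredundant base for $G$ on $\Omega$, any witness $g \in G^{(i-1)}\setminus G^{(i)}$ lies in $G \leq \widetilde{G}$, so $\widetilde{G}^{(i-1)} \supsetneq \widetilde{G}^{(i)}$ as well; hence the same sequence is irredundant for $\widetilde{G}$ on $\PG_m(V)$ and (by successively adjoining points) can be extended to a full irredundant base for $\widetilde{G}$. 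Consequently
\[\I(G,\Omega)\;\leq\;\I(\widetilde{G},\PG_m(V)),\]
and by $\widetilde{G}$-equivariant duality between $m$- and $(d-m)$-spaces we may assume $m \leq d/2$, so Proposition~\ref{g01} applies.

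It then suffices to prove that the upper bound on $\I(\widetilde{G},\PG_m(V))$, essentially $2m(d-m)\log q + \log f + O(1)$, is at most $5\log n - 1$, where $n = |\Omega|$. For each of the remaining classical families (symplectic, unitary, and orthogonal with $G_0 \neq \POm_8^+(q)$) and each orbit type (totally isotropic/singular or non-degenerate), the standard Grassmannian-style formulas yield a lower bound of the form $\log n \geq c\, m(d-m)\log q - e(m,d)$, with $c$ close to $1$ and $e$ at most linear in $m$. Substituting into the above gives the desired inequality for all but a controllable set of parameter values.

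The main obstacle will be the totally singular/isotropic case with $m$ close to $d/2$, where $|\PG_m(V)|/n$ can be as large as $q^{\Theta(m^2)}$, so that $2\log|\PG_m(V)|$ exceeds $2\log n$ by a nontrivial amount; here the slack up to $5\log n$ still suffices, but the bookkeeping is tighter. For those subcases, and for residual small-degree or small-$q$ cases (including ones involving sporadic isomorphisms among low-rank classical groups), I would switch to Lemma~\ref{fands}\ref{sneaky}: using $|G| \leq f\cdot |\Aut(G_0)|$ together with \cite[(6.19)]{GLS} and the lower bounds on $n$ from \cite[\S6]{GLS}, verify $|G| < n^5/2$ directly, and dispose of any remaining finite list of cases exactly as in the proof of Lemma~\ref{nov2}.
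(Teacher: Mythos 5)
Your overall route matches the paper's: reduce to $\PGamL(V)$ on $\PG_m(V)$ via Lemma~\ref{3comb}\ref{subgroup}, invoke Proposition~\ref{g01}, and close with a lower bound on $\log n$ in terms of $m(d-m)\log q$, falling back on an order-count ($|G| < n^5$, via Lemma~\ref{fands}\ref{sneaky}) when that bound is unavailable. However there is one genuine gap: you assert $G \leq \widetilde{G} = \PGamL(V)$ without argument. This is not automatic. For $G_0 = \PSp_4(2^f)$ the graph automorphism does not normalize $\PGamL_4(q)$ in $\Sym{\PG_1(V)}$, and a priori $G$ might contain it (triality for $\POm_8^+$ is ruled out by hypothesis, but this one is not). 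The paper fills this hole by observing that since $\Omega$ is a single $G$-orbit of subspaces, the $\PSp_4$ graph automorphism (which swaps the relevant subspace types) cannot lie in $G$; you need this observation or some substitute before the reduction step is valid.

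Two smaller calibration issues, neither fatal. First, the claimed lower bound $\log n \geq c\,m(d-m)\log q - e(m,d)$ with $c$ ``close to $1$'' is too optimistic for totally singular/isotropic spaces near $m = d/2$; the constant that actually holds (and is what the paper takes from \cite[Lemma~7.14]{GLS}) is $c = \tfrac{1}{2}$, and this is exactly tight enough to give $2m(d-m)\log q + \log f < 5\log n - 1$ after absorbing $\log f$ via Lemma~\ref{logfbound}. Second, you anticipate needing the $|G| < n^5$ fallback across a range of tight subcases, but in fact the $c = \tfrac{1}{2}$ bound covers every case except $G_0 = \POm_d^+(q)$ with $m = d/2$, which is the only place the paper switches to the order-count argument (much as in Lemma~\ref{nov2}). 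With the $G \leq \PGamL(V)$ justification added and the constants tracked as above, the proof goes through as the paper's does.
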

\begin{proof}
First let $G_0 = \mathrm{P}\Omega_d^{+}(q)$ and $m = \frac{\n}{2}$. Then $\n \geq 10$, and so $2\n^2-12\n-16>0$. Hence $10d^2-20d>8d^2-8d+16$ and it follows that $\frac{d^2}{8}-\frac{d}{4} > \frac{d^2}{10}-\frac{d}{10}+\frac{1}{5} $. 
By \cite[Table 4.12 ]{tim1} 
\begin{equation}\label{prod} \t= \prod_{i=1}^{\frac{\n}{2}-1} (q^i+1)>\prod_{i=1}^{\frac{\n}{2}-1}q^i=q^{\frac{d^2}{8}-\frac{d}{4}} > q^{ \frac{d^2}{10}-\frac{d}{10}+\frac{1}{5} }.
\end{equation}
Hence 
$$\begin{array}{lll}
\I(G) & \leq \log |G|-1 & \text{ by Lemma \ref{fands}\ref{sneaky},}\\
& \leq \log \Big(q^{\frac{\n^2}{2}-\frac{\n}{2}+1}\Big)-1 & \text{ by \cite[p25]{GLS}},\\
& = 5\log \Big(q^{\frac{\n^2}{10}-\frac{\n}{10}+\frac{1}{5}}\Big)-1 \\
& < 5 \log \t -1 & \text{ by \eqref{prod}.}
\end{array}$$
Therefore we may assume for the rest of the proof that $G_0 \neq \POm_d^{+}(q)$, so by \cite[Lemma 7.14]{GLS}
\begin{equation}\label{smallom}
\frac{1}{2}m(\n-m)\log q < \log \t.
\end{equation}
Since $\Omega$ is a $G$-orbit of subspaces, if $G_0=\PSp_4(q)$ then $G$ does not induce the graph isomorphism by \cite[Table 8.14]{ColvaBook}. Hence since $G_0 \neq \mathrm{P}\Omega_8^{+}(q)$ and $\Omega \subseteq \PG_m(V)$, we may assume that
$G \leq \PGamL_{\n}(q)$. Then Lemma \ref{3comb}\ref{subgroup} implies that
$$\I(G) \leq \I\big(\PGamL_{\n}(q),\PG_m(V)\big),$$
and so in particular the bounds from Proposition \ref{g01} apply.

We begin with $m=1$. If $q=2$ then we split into two cases. If $\n \leq 4$ then by Proposition \ref{g01}
$$\I(G) \leq 2(d-1) +1\leq 7 < 5 \log \t -1.$$ 
If instead $\n \geq 5,$ so that $\frac{1}{2}(d-1) \geq 2$, then by Proposition \ref{g01} and \eqref{smallom}
$$\I(G) \leq 2(\n-1)+1 \leq 2(d-1)+\frac{1}{2}(d-1)-1< 5 \log \t -1.$$
To complete the case of $m=1$, let $q \geq 3$. Since $G_0 \neq \PSL_d(q)$ we may assume that $d \geq 3$ and so it can be verified that $ \frac{6}{7f}(2+\log f)+1<d$. Hence
$$2+\log f< \frac{7}{6}f(d-1) \leq \frac{7}{6}f(d-1)\log p= \frac{7}{6}(d-1)\log q .$$
Therefore it follows from  Proposition \ref{g01} and \eqref{smallom} that 
$$\I(G) \leq \frac{4}{3}(d-1)\log q+\log f+1 < \frac{5}{2}(d-1)\log q-1<5 \log n-1.$$

Now let $m = \frac{\n}{2} $ and $q=2$. Then by Proposition \ref{g01} and \eqref{smallom}
$$\I(G) \leq \frac{\n^2}{2}+1 =4\Big(\frac{1}{2}m(d-m) \Big)+1<4 \log \t+1<5\log n-1,$$
since $n>4$.

Hence we may assume that $m \geq 2$, and that if $m = \frac{\n}{2}$ then $q \geq 3$. Therefore 
$$\begin{array}{lll}
\I(G) & \leq 2m(\n-m)\log q+\log f & \text{ by Proposition \ref{g01},}\\
& \leq 2m(\n-m)\log q+\frac{1}{3}m(d-m)\log q - \frac{4}{3} & \text{ by Lemma \ref{logfbound},}\\
& < \frac{14}{3} \log \t - \frac{4}{3} & \text{ by \eqref{smallom},}\\
& < 5 \log \t -1.
\end{array}$$
\end{proof}
\subsection{Proof of Theorem \ref{ASlem}}
We begin by proving Theorem \ref{ASlem} for non-standard actions. 

\begin{proposition}\label{nonstand} Let $G$ be an almost simple, primitive non-standard subgroup of $\Sym{\Omega}$ and let $n=|\Omega|$. Then 
$$\I(G, \Omega) \leq 4 \log \t+1.$$
\end{proposition}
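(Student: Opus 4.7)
The natural strategy is to combine Lemma~\ref{fands}\ref{fourth}, which gives $\I(G) \leq (\bb(G)-1)\log n + 1$ for transitive $G$, with the known bounds on base size for non-standard actions. In particular, whenever $\bb(G) \leq 5$ the conclusion $\I(G) \leq 4\log n + 1$ is immediate, so the task reduces to establishing $\bb(G) \leq 5$ for all primitive non-standard actions of almost simple groups, apart from an explicit finite list of exceptions which must be handled individually.

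The relevant base size bounds come from the resolution of Cameron's base-size conjecture. Specifically, one can invoke Burness's theorem for almost simple classical groups outside subspace actions, the work of Burness--Guralnick--Saxl for socle $\A_r$ acting non-standardly, Burness--Liebeck--Shalev for exceptional groups of Lie type, and Burness--O'Brien--Wilson for sporadic groups. Collectively these results give $\bb(G) \leq 5$ (in fact typically much smaller) outside a short explicit list of exceptional pairs $(G,\Omega)$ where $\bb(G) \in \{6,7\}$. On this generic part the proposition then follows from Lemma~\ref{fands}\ref{fourth}.

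For each exceptional pair, both $|G|$ and $n$ are known explicitly. The first refinement to try is Lemma~\ref{fands}\ref{sneaky}, giving $\I(G) \leq \log|G| - 1$; this suffices provided $|G| \leq 4n^4$, which already disposes of most exceptions. For the small residue of cases --- most notably certain primitive actions of the larger sporadic groups, such as $\M_{24}$ in its natural $5$-transitive action on $24$ points, where $\bb(G)=7$ and $|G|$ is comparatively large relative to $n^4$ --- one sharpens the argument either by bounding $\I(G)$ using the maximal subgroup chain length $\ell(G)$ (which can be much smaller than $\log |G|$ and can be read off from the subgroup structure) or, failing that, by a direct computation of $\I(G)$ in a computer algebra system such as \textsc{Magma}.

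The main obstacle is precisely this residual set of sporadic and a few small exceptional Lie-type exceptions: for these, neither of the two general tools in Lemma~\ref{fands} is individually tight enough, and each case must be argued by hand or verified computationally. Fortunately the list of exceptions is short, the groups are small and completely catalogued, and their maximal subgroup structures are known, so each case reduces to a finite check.
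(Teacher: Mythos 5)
Your high-level strategy matches the paper's: reduce via Lemma~\ref{fands}\ref{fourth} to the cases with $\bb(G)\geq 6$, invoke the resolution of Cameron's conjecture by Burness and collaborators to obtain a short list of exceptional pairs, and then treat those exceptions separately. The gap lies in how you propose to handle the exceptions. You would apply Lemma~\ref{fands}\ref{sneaky}, i.e.\ $\I(G)\leq\log|G|-1$, which suffices exactly when $|G|\leq 4n^4$. The paper instead uses the identity $\I(G,\Omega)=\I(H,\Omega)+1$ for $H$ a point stabiliser, combined with Lemma~\ref{fands}\ref{insol} applied to the insoluble group $H$; this yields $\I(G)<\log|H|$, which suffices whenever $|H|<n^4$, i.e.\ $|G|<n^5$. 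Passing to the point stabiliser thus buys a whole factor of $n$ over your bound, and that factor is needed: for $(G,H)=(\mathrm{Co}_3,\mathrm{McL}.2)$ one has $n=276$, with $|H|\approx 1.8\times 10^9<n^4\approx 5.8\times 10^9$ but $|G|\approx 5\times 10^{11}>4n^4\approx 2.3\times 10^{10}$, so your Lemma~\ref{fands}\ref{sneaky} bound already fails here, and similarly for $\mathrm{Co}_2$ and $\mathrm{Fi}_{22}.2$.

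Your proposed fallback — replacing $\log|G|-1$ by $\ell(G)$, or a computer check — is only a partial fix and conceals the more serious problem. You describe the residue as sporadic groups together with ``a few small exceptional Lie-type exceptions'' that can be handled by ``a finite check.'' In fact, by \cite[Theorem 1]{newtim2} the list of non-standard pairs with $\bb(G)=6$ includes the \emph{infinite families} $(\Soc(G),H)\in\{(E_7(q),P_7),(E_6(q),P_1),(E_6(q),P_6)\}$, for which no finite enumeration or Magma verification is possible. For these one needs a uniform argument, and the crude inequality $\ell(G)\leq\log_2|G|$ gives nothing beyond Lemma~\ref{fands}\ref{sneaky}, while sharp bounds on $\ell(E_6(q))$ and $\ell(E_7(q))$ would be a project in themselves. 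The paper handles these families cleanly by bounding $|G|$ above by an explicit power of $q$ and $n\geq m(G_0)$ below via known minimal permutation degrees, and then checking $|G|<n^5$. Finally, note that both routes must treat $(\M_{23},\M_{22})$ as a genuine outlier, since $|\M_{22}|>23^4$; the paper does so by observing that $\I(\M_{23},\{1,\dots,23\})=\I(\M_{24},\{1,\dots,24\})-1=6$, and your proposal would need something equally ad hoc there.
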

\begin{proof}
By a landmark result of Burness and others \cite{tim, tim1, tim2, tim3}, either $(G, \Omega) = (M_{24}, \{1, \ldots, 24\})$ or $\bb(G, \Omega) \leq 6$. 
By {\cite[p10]{GLS}}, $\I(\M_{24}, \{1, \ldots, 24\})=7 < 2\log 24$. If $\bb(G) \leq 5$, then the result follows by Lemma \ref{fands}\ref{fourth}. Hence we may assume that $\bb(G, \Omega)=6$. \medskip

Let $G$ have point stabilizer $H$. By a further result of Burness \cite[Theorem 1]{newtim2}, either
\begin{equation}\label{GH} (G,H) \in \big\{ (\M_{23},\M_{22}), (\mathrm{Co}_3,\mathrm{McL}.2), (\mathrm{Co}_2,\mathrm{U}_6(2).2), (\mathrm{Fi}_{22}.2,2.\mathrm{U}_6(2).2)\big\} \text{ or}
\end{equation}
\begin{equation}\label{G0H} (\Soc(G),H)\in \big\{(E_7(q),P_7), (E_6(q),P_1), (E_6(q),P_6)\big\}.
\end{equation}

Therefore $H$ is insoluble, and so $\I(H) +1 < \log |H|$ by Lemma \ref{fands}\ref{insol}. Hence if we can prove in each case that $|H| < [G:H]^4$ then the result will follow, since
$$\I(G)=\I(H)+1 <\log|H| < \log [G:H]^4 = 4 \log \t.$$
Since $\M_{23}$ is the point stabilizer of $\M_{24}$ it follows that
$$\I(\M_{23}, \{1,\ldots,23\})=\I(\M_{24}, \{1,\ldots,23,24\}) -1=6<2\log 23.$$
If $G$ is $\mathrm{Co}_3$, $\mathrm{Co}_2$ or $\mathrm{Fi}_{22}.2$, then using \cite{Atlas} it is easily verified that $|H| < [G:H]^4$, so the result follows.

Therefore assume that $(\Soc(G),H)$ is as in \eqref{G0H}. Let $m(G)$ be the smallest degree of a faithful transitive permutation representation of $G$.
If $|G| < m(G)^5$, then 
$$|H |= \frac{|G|}{[G:H]} <\frac{m(G)^5}{[G:H]}\leq [G:H]^4,$$
and so the result will follow.

First let $G_0 = E_6(q)$. By \cite{Atlas}
$$|E_6(q)|=\frac{q^{36}(q^{12}-1)(q^{9}-1)(q^{8}-1)(q^{6}-1)(q^{5}-1)(q^{2}-1)}{(3,q-1)}$$
and $|\Out(E_6(q))| \leq 2f(3,q-1) \leq q(3,q-1)$. Hence
$$|G| \leq q^{37}(q^{12}-1)(q^{9}-1)(q^{8}-1)(q^{6}-1)(q^{5}-1)(q^{2}-1)<q^{37+12+9+8+6+5+2}=q^{79}.$$
By \cite[p2]{AV} 
$$m(G) \geq m(G_0) \geq \frac{(q^9-1)(q^8+q^4+1)}{q-1}=(q^8+q^7+ \cdots + q+1)(q^8+q^4+1)>q^{16}.$$
Hence $|G|<q^{79}<q^{80} < m(G)^5$ as required.\smallskip

Now let $G_0=E_7(q)$. By \cite{Atlas}
$$|E_7(q)|=\frac{q^{63}(q^{18}-1)(q^{14}-1)(q^{12}-1)(q^{10}-1)(q^{8}-1)(q^{6}-1)(q^{2}-1)}{(2,q-1)}$$
and $|\Out(E_7(q))|=(2,q-1)f<(2,q-1)q$. Hence
$$|G| \leq q^{64}(q^{18}-1)(q^{14}-1)(q^{12}-1)(q^{10}-1)(q^{8}-1)(q^{6}-1)(q^{2}-1) <q^{64+18+14+12+10+8+6+2}=q^{134}.$$
By \cite[p5]{AV}
$$m(G) = \frac{(q^{14}-1)(q^9+1)(q^5+1)}{q-1}
 = (q^{13}+q^{12} + \cdots +q+1)(q^9+1)(q^5+1)
>q^{13+9+5}
=q^{27}.$$
Hence $|G| <q^{134}<q^{135} <m(G)^5$.
\end{proof}
We note that this bound could be improved if the groups with minimal base size 5 were classified. \smallskip

\noindent \textit{Proof of Theorem \ref{ASlem}.} If the action of $G$ on $\Omega$ is non-standard then the result follows by Proposition \ref{nonstand}. Hence we may assume that $G$ is standard. 

If $G$ is alternating and not large base, then $\Omega$ is a set of partitions. Hence $\I(G, \Omega) < 2 \log |\Omega|$ by \cite[Lemma 6.6]{GLS}. 

Therefore $G$ is classical, and the action of $G$ on $\Omega$ is a subspace action.
If $G$ is as in Case (ii) of Definition \ref{subspacesg}, then $\I(G, \Omega) < \frac{11}{3} \log |\Omega|$ by \cite[Lemma 6.7]{GLS}.
If $G_0=\PSL_{\n}(q)$ and $\Omega$ is a set of subspaces, or a set of pairs of subspaces, of $V$, then the result holds by Proposition \ref{newlem} and Lemma \ref{g02} respectively. 
If $G_0 \neq \PSL_d(q)$ and $\Omega$ is a set of subspaces, then the result follows by Proposition \ref{otherclassical}.
Hence by {\cite[5.4]{newtim}} we may assume that either $G_0=\mathrm{P}\Omega_8^+(q)$ and $G$ contains a triality automorphism; or $G_0=\textrm{Sp}_4(2^f)'$ and $G$ contains a graph automorphism. In the former case the result holds by Lemma \ref{nov2}. In the latter $\I(G, \Omega) < \frac{11}{3} \log |\Omega|$ by \cite[Lemma 6.12]{GLS}. \hfill \qed
\section{Proof of Theorem \ref{main}}\label{PAgroups}
Here we use the form and notation of the O'Nan Scott Theorem from \cite{cheryl}.
We begin by considering groups of type PA, and then we prove Theorem \ref{main}.
\begin{lemma}\label{PAlem} Let $G$ be a subgroup of $\S{\t}$ of type PA that is not large base. Then 
$$\I(G) < 5\log \t.$$
\end{lemma}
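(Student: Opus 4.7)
The plan is to reduce to the almost simple case covered by Theorem \ref{ASlem}. In the O'Nan--Scott setup of \cite{cheryl}, a primitive group $G$ of type PA sits inside a wreath product $H \wr \S{k}$ (with $k \geq 2$) acting on $\Omega = \Delta^k$ by the product action, where $H$ is almost simple and primitive on $\Delta$. Writing $m = |\Delta|$ gives $n = m^k$. Since $G$ is not large base, neither is $H$, so Theorem \ref{ASlem} yields $\I(H, \Delta) < 5\log m - 1$. Set $K = G \cap H^k$, which is normal in $G$ with $G/K$ isomorphic to a subgroup of $\S{k}$; Lemma \ref{3comb}\ref{normal} then gives $\I(G) \leq \I(K) + \ell(G/K)$.

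The heart of the argument is the bound
\[
\I(H^k, \Delta^k) \leq k\, \I(H, \Delta) - (k-1).
\]
To prove it, I would take a maximal irredundant base $(\omega_1, \ldots, \omega_l)$ for $H^k$ on $\Delta^k$, writing $\omega_i = (\delta_{i,1}, \ldots, \delta_{i,k})$. For each coordinate $j$, let $r_j$ denote the number of indices $i$ at which $H_{\delta_{1,j}, \ldots, \delta_{i-1, j}} > H_{\delta_{1,j}, \ldots, \delta_{i, j}}$; a short check shows that the subsequence of $(\delta_{i,j})_i$ indexed by these $i$ has strictly decreasing $H$-stabilizer chain, so it is an irredundant sequence for $H$ on $\Delta$. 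Because $H$ is transitive, any such sequence extends to an irredundant base, giving $r_j \leq \I(H, \Delta)$. At every step $i$ the product stabilizer strictly decreases in at least one coordinate, so $\sum_j r_j \geq l$. Crucially, at the first step transitivity of $H$ forces $H_{\delta_{1,j}} < H$ in \emph{every} coordinate, so step $1$ alone contributes $k$ to $\sum_j r_j$. Summing, $k\, \I(H, \Delta) \geq \sum_j r_j \geq k + (l-1)$, which rearranges to the claimed bound.

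Since $K \leq H^k$, Lemma \ref{3comb}\ref{subgroup} yields $\I(K) \leq \I(H^k, \Delta^k) \leq k\, \I(H, \Delta) - (k-1) < 5 \log n - 2k + 1$. Combined with the standard bound $\ell(\S{k}) \leq 2k - 2$, valid for all $k \geq 2$ (which can be read off from the Cameron--Solomon--Turull formula $\ell(\S{k}) = \lceil 3k/2 \rceil - b(k) - 1$, where $b(k)$ counts the $1$s in the binary expansion of $k$), this gives $\I(G) < 5\log n - 1$, slightly stronger than required. The main obstacle is securing the $-(k-1)$ correction in the key estimate: without it, the naive bound $\I(H^k) \leq k\, \I(H)$ combined with $\ell(\S{k}) \sim 3k/2$ would yield only $\I(G) \lesssim 5 \log n + k/2$, which is insufficient. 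The correction reflects the observation that the very first base point for $H^k$ must simultaneously reduce the stabilizer in all $k$ coordinates, and this is precisely what absorbs the top-group contribution $\ell(G/K) \leq \ell(\S{k})$ from the wreath product.
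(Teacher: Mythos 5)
Your proposal is correct and follows the same overall route as the paper: reduce to the almost simple case via $\I(G) \leq \I(H^k, \Delta^k) + \ell(\S{k})$ using Lemma~\ref{3comb}, then apply Theorem~\ref{ASlem}. The one place you diverge is that the paper simply cites \cite[Lemma 2.6]{GLS} for the inequality $\I(H^k, \Delta^k) \leq k(\I(H,\Delta)-1)+1$ (which is the same as your $k\I(H,\Delta)-(k-1)$), whereas you give a self-contained coordinate-counting proof of it. Your argument for that inequality is sound: each step of a maximal irredundant base for $H^k$ strictly refines at least one coordinate stabiliser, each coordinate contributes a chain of length at most $\I(H,\Delta)$, and the first step strictly refines all $k$ coordinates because $H$ is transitive on $\Delta$ with $|\Delta| \geq 5 > 1$; this is precisely what absorbs the $\ell(\S{k})$ overhead. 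Your use of $\ell(\S{k}) \leq 2k-2$ is weaker than the paper's $\ell(\S{k}) \leq \tfrac{3}{2}k$ from \cite{PC}, but it is still adequate and your final estimate $\I(G) < 5\log n - 1$ is in fact marginally sharper than the stated bound. Including the short proof of the product lemma makes the section more self-contained; citing \cite[Lemma 2.6]{GLS} is shorter. Either way the lemma is established.
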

\begin{proof} 
Since $G$ is of type PA there exists an integer $r \geq 2$, a finite set $\Delta$ and an almost simple subgroup $H$ of $\Sym{\Delta}$ such that $G \leq H \wr \S{r}$. Since $G$ is not large base, neither is $H$. Let $s=|\Delta|$, so that $n=s^r$ with $s \geq 5$. Then
$$\begin{array}{rll}
\I(G, \Omega) & \leq \I(H^r , \Delta^r) + \ell(\S{r}) & \text{ by Lemma \ref{3comb}\ref{subgroup} and \ref{normal},}\\
& \leq \I(H^r , \Delta^r) +  \frac{3}{2}r & \text{ by \cite{PC},}\\
& \leq r(\I(H, \Delta)-1)+1+\frac{3}{2}r & \text{ by \cite[Lemma 2.6]{GLS},}\\
&<  r(5 \log s-2)+1+\frac{3}{2}r & \text{ by Theorem \ref{ASlem},}\\
& < 5\log s^r -\frac{1}{2}r+1 \\
& \leq 5 \log \t & \text{ since } r \geq 2.
\end{array}$$
\end{proof}
We can now prove Theorem \ref{main}.\\

\noindent \textit{Proof of Theorem \ref{main}}
Let $G$ be a primitive group which is not large base. If $G$ is almost simple, then the result holds by Theorem \ref{ASlem}. If $G$ is of type PA then the result holds by Lemma \ref{PAlem}. For all other $G$, the result holds by \cite[Props 3.1, 4.1 and 5.1]{GLS}
 \hfill \qed

\smallskip
Veronica Kelsey \& Colva M. Roney-Dougal: Mathematical Institute, Univ. St Andrews, KY16 9SS, UK\\
vk49@st-andrews.ac.uk\\
Colva.Roney-Dougal@st-andrews.ac.uk
\end{document}